\newtheorem{theorem}{Theorem}
\newcommand{\ee}{{\mathbf e}} 
\newcommand{\barv}{{\bar v^*}} 
\newcommand{\barvtwo}{{\bar v^{*2}}} 
\newcommand{\om}{{\omega}} 
\newcommand{\cl}{{C_L}}
\newcommand{\clp}{{C_L^{\prime}}}
\newcommand{\clpp}{{C_L^{\prime\prime}}}
\newcommand{\clppp}{{C_L^{\prime\prime\prime}}}
\newcommand{\cd}{{C_D}}
\newcommand{\cdp}{{C_D^{\prime}}}
\newcommand{\cdpp}{{C_D^{\prime\prime}}}
\newcommand{\Lam}{\mbox{\boldmath$\Lambda$}}  
\newcommand{\pa}[2]{\ensuremath{\frac{\partial #1}{\partial #2}}}
\begin{document}
\begin{frontmatter}
	\title{Global phase space structures in a model of passive descent}
	\author{Gary K. Nave, Jr.}
	\ead{gknave@vt.edu}
	\author{Shane D. Ross}
	\address{Engineering Mechanics Program, Virginia Tech \\ Blacksburg, VA, 24061}
	\begin{abstract}
		Even the most simplified models of falling and gliding bodies exhibit rich nonlinear dynamical behavior. Taking a global view of the dynamics of one such model, we find an attracting invariant manifold that acts as the dominant organizing feature of trajectories in velocity space. This attracting manifold captures the final, slowly changing phase of every passive descent, providing a higher-dimensional analogue to the concept of terminal velocity, the terminal velocity manifold.
        Within the terminal velocity manifold in extended phase space, there is an equilibrium submanifold with equilibria of alternating stability type, with different stability basins.
		In this work, we present theoretical and numerical methods for approximating the terminal velocity manifold and discuss ways to approximate falling and gliding motion in terms of these underlying phase space structures.
	\end{abstract}
	\begin{keyword}
		Gliding animals \sep
		Invariant manifolds \sep
		Gliding flight \sep
		Reduced order models \sep
		Passive aerodynamics
	\end{keyword}
	\date{Received: 13 April 2018 / Accepted: 18 April 2019}
\end{frontmatter}
\section{Introduction}
A wide variety of natural and engineered systems rely on aerodynamic forces for locomotion. Arboreal animals use gliding flight to catch prey or escape predators \cite{socha2015animals}, while plant seeds may slowly follow the breeze to increase dispersion \cite{minami_various_2003}. To compare different gliding animals with different morphologies, a variety of studies have resolved detailed motion of animals' glides from videos or other tracking methods \cite{jackson2000glide,vernes2001gliding,mcguire2005cost,socha_non-equilibrium_2010,bahlman2013glide}. Throughout their glide, animals may approach an equilibrium glide, but typically spend at least half of the glide between the initial ballistic descent, where aerodynamic forces are small, and the final equilibrium state when aerodynamic forces completely balance weight \cite{mcguire2005cost,socha_non-equilibrium_2010,bahlman2013glide}. Falling seeds exhibit a variety of different falling motions as well, as fluid forces may drive rotation, fluttering, or tumbling as they descent \cite{minami_various_2003}. These behaviors have provided inspiration for physical research and bio-inspired engineering design \cite{vogel1994life,kovavc2011epfl,dickson2013design,leonard2010coordinated}. To compare these behaviors and set the stage for future engineered models, it is useful to consider a mathematical model for this motion.

Both gliding animal flight and plant seed descent represent special cases of passive aerodynamic descent, a topic with very rich history dating back to at least the 19th century in the work of Maxwell and Zhukovskii \cite{maxwell1854particular,lamb1932hydrodynamics,andronov2013theory}. Prior experimental studies have identified several canonical behaviors exhibited by falling disks, plates, and plant seeds characterized by different couplings between rotation and translation in space \cite{minami_various_2003,field_chaotic_1997}. In all of these behaviors, an inherently high-dimensional system, which must consider velocities, angles, and angular velocities in 3D space, converges to a low-dimensional behavior, whether traveling in a single plane or going through a cycle of velocities \cite{field_chaotic_1997,ern2012wake,andersen_unsteady_2005,andersen_analysis_2005,tam2010tumbling,vincent2016holes}. %Within planar motions, some examples of these canonical behaviors include steady descent, fluttering, and tumbling \cite{field_chaotic_1997,ern2012wake,andersen_unsteady_2005,andersen_analysis_2005}. The shape and mass distribution of the body determine its falling behavior; even flexibility or holes in the shape have been shown to affect its motion \cite{tam2010tumbling,vincent2016holes}. 

Mathematical models have offered many insights into passive descent. Ideal flow theory has been used to study the motion of a body both through a steady fluid and interacting with shed vortices \cite{lamb1932hydrodynamics,aref1993chaotic,borisov2007dynamic,Roenby1871}. The Zhukovskii problem or phugoid model, which assumes that the wing travels with constant angle-of-attack, is a two-dimensional ordinary differential equation for flight from which phugoid oscillations, which couple forward velocity with pitch angle, arise \cite{andronov2013theory}.  Andersen et al. developed a phenomenological model based on experiments and simulations which produce fluttering, tumbling, and even chaotic behavior through a 4D differential equation \cite{andersen_unsteady_2005,andersen_analysis_2005}. To focus specifically on gliding animal flight and compare the gliding capabilities of different animals, Yeaton et al. \cite{yeaton_global_2017} introduced a two-dimensional model for non-equilibrium gliding of animals. It is a modification of this model which we will consider in the present work.
In this model, the authors decoupled translational and rotational dynamics in order to take a deeper view of the translational behavior and shape dependence based on lift and drag characteristics alone. To do so, inspired by the motion of gliding animals, the authors treat pitch angle as fixed with respect to the ground, assuming that the glider has some amount of control to hold this angle. Lift and drag are treated as functional parameters in this model, which is used to capture the differences in glider shapes. This model works especially well for gliding animals, but may be extended to general passive descent.  Yeaton et al. \cite{yeaton_global_2017} analyzed a variety of animal gliders and found that in most examples, trajectories in velocity space collapse onto a single curve as their velocities evolve slowly toward an equilibrium glide, as can be seen in Figure \ref{fig:TVMdemo}. In such examples, most of the dynamics of a passively descending body with constant pitch lie on or near an attracting normally hyperbolic invariant manifold in velocity space.

Attracting normally hyperbolic invariant manifolds, or simply attracting manifolds, such as the one found in Yeaton et al. \cite{yeaton_global_2017}, can play a very useful role in understanding the dynamics of a system \cite{wiggins2013normally}. As most of the dynamics occur near the manifold itself, the dimension of the system may be reduced by projecting the dynamics onto the attracting manifold \cite{kevrekidis2003equation}. In general, attracting manifolds also represent barriers to transport in the evolution of a system and therefore play a key role in understanding, for example, mixing in fluid systems \cite{shadden2005definition,haller_variational_2011,haller_lagrangian_2015}. The attracting manifold observed by Yeaton et al. \cite{yeaton_global_2017} bears similarities to the idea of terminal velocity, acting as a barrier between trajectories which start with small velocity and those which begin with very large velocity. Therefore, we will refer to this curve as the \textit{terminal velocity manifold}, or TVM. This comparison is made more clear in Section \ref{ss:terminalVelocity}. The understanding of such a structure in this model will lead to a clear way to compare a variety of gliders with the same set of tools, will allow for the dimension reduction of the system to understand the structure of these models, and may lead to the possibility of controlling gliding flight with the intuition of the system's global structure.

The purpose of this paper is to investigate the glider model introduced by Yeaton et al. \cite{yeaton_global_2017} more deeply by numerically identifying and analyzing the properties of the terminal velocity manifold. To analyze this manifold, we will use three example gliders, each one representing a potential application for this model. The first is the simplified mathematical model for a falling flat plate \cite{wang2004unsteady}. The second is a biologically-inspired airfoil, based on the cross-sectional shape of a flying snake, \textit{Chrysopelea paradisi} \cite{holden_aerodynamics_2014}. The third represents an engineering application, and uses wind-tunnel measurements on a NACA-0012 airfoil \cite{sheldahl1981aerodynamic}. With these three examples, we will investigate the terminal velocity manifold, methods to find it, and how it changes with pitch angle, both as a bifurcation parameter and later as a time-varying parameter.

We begin the study by introducing the model from Yeaton et al. \cite{yeaton_global_2017} which will be used in our analysis and describing the lift and drag functions for the flat plate, flying snake, and NACA-0012 airfoils in Section \ref{s:GliderModel}. In Section \ref{s:equilibria}, we look at properties of the equilibrium points and bifurcations of the model. In Section \ref{s:TVMdetection}, we discuss the terminal velocity manifold as it relates to stable and unstable manifolds of fixed points and numerical schemes to calculate it by bisection and the trajectory-normal repulsion rate. Once calculated, we vary the pitch parameter to look at changes in the terminal velocity manifold and simulate controlled changes in pitch in Section \ref{s:3DTVM}. From the work of this study, we gain new insights into gliding flight and the structure of slow-fast systems, discussed in Section \ref{s:conclusion}.

\section{A simple glider model}\label{s:GliderModel} 
In their investigation of non-equilibrium gliding flight of animals, Yeaton et al. \cite{yeaton_global_2017} introduce a general model for gliding flight which treats lift and drag as functions of angle-of-attack and pitch as a fixed parameter. This model provides a framework for comparing different gliding animals or objects through a non-dimensional scaling parameter, $\epsilon$. As a dynamical system, the system presents a fascinating model for investigation: it is naturally nonlinear, relies on intuitive assumptions which may be relaxed, and depends on functional input parameters.

\begin{figure}[t]
	\centering
	\includegraphics[width=3in]{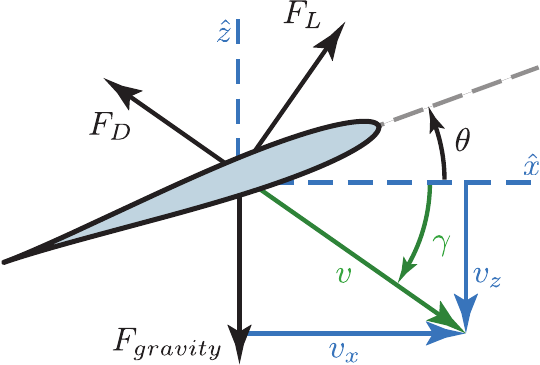}
	\caption{Definitions of angles and directions in the glider model used in this paper. The pitch angle $\theta$ represents the body's fixed orientation with respect to the ground. The black arrows show the lift force $F_L$, drag force $F_D$ and gravitational force $F_{gravity}$ which give the model in (\ref{eq:dimensionalEOM}). The magnitude $v$ and direction $\gamma$ of the body's velocity, shown in green, form the velocity-polar coordinates used in (\ref{eq:glider model}), while the $\hat{x}$ and $\hat{z}$ directions shown in blue comprise the inertial coordinates used in this study.}
	\label{fig:AngleDef}
\end{figure}

To begin, consider a body moving in an unbounded, quiescent fluid medium under the force of gravity. The forces on the body consist solely of gravity and forces which arise from interactions with the surrounding fluid.  The complexity of this problem, then, depends on the choice of model for the fluid forces. To fully capture the physics, a coupled infinite-dimensional fluid-structure model would be required.

For the purposes of presenting a low-dimensional model, however, we will consider only quasi-steady lift and drag, dependent solely on angle of attack, neglecting unsteady fluid forcing and Reynolds number dependence. As shown in the work of Andersen et al. \cite{andersen_unsteady_2005,andersen_analysis_2005}, a quasi-steady assumption for lift and drag captures the dominant behavior in most situations of passive descent. The fluid forces, then, are given by $F_L = \frac{1}{2}\rho S V^2 C_L $ and $F_D = \frac{1}{2}\rho SV^2 C_D$, where $\rho$ represents the density of the surrounding fluid, $S$ represents the spanwise cross-sectional area of the body, $V$ represents the magnitude of the body's velocity, and $C_L$ and $C_D$ represent the projection of the total fluid force onto coordinates perpendicular to (lift) and parallel to (drag) the direction of motion, respectively. The directions of these fluid forces are shown in Figure \ref{fig:AngleDef}. For a glider consisting of an extruded two-dimensional shape, $S=cs$ with $c$ as the chord length of the body and $s$ as the length of the span. As we are neglecting boundary effects and the fluid forces are the same anywhere in physical space, the position of the body represents two ignorable coordinates.

Under these assumptions, the equations of motion are given by the following equations, taking the parallel and perpendicular accelerations to be $a_\parallel=dV/dT$ and $a_\perp=V\,d\gamma/dT$, and the gravitational force as $F_{gravity}=mg$,
\begin{equation}
\begin{aligned}
m\frac{dV}{dT} &= -\frac{1}{2}\rho cs V^2 C_D+mg\sin\gamma, \\
mV\frac{d\gamma}{dT} &= -\frac{1}{2}\rho cs V^2 C_L+mg\cos\gamma,
\end{aligned}
\label{eq:dimensionalEOM}
\end{equation}
where $m$ is the mass of the glider, $g$ represents gravitational acceleration, and $\gamma$ is the clockwise direction of velocity with respect to the horizontal as shown in Figure \ref{fig:AngleDef}.

As in  \cite{yeaton_global_2017}, we introduce nondimensional time $t$ and velocity $v$, choosing $\frac{d}{dT} = \sqrt{g/c\epsilon}\frac{d}{dt}$ and $V =\sqrt{gc/\epsilon}v$ to rescale (\ref{eq:dimensionalEOM}). This rescaling features a nondimensional factor, $\epsilon=\frac{\rho c S}{2m}$, which is the universal glide scaling parameter. It can be used to compare various gliders against one another \cite{yeaton_global_2017}. The dimensionless equations of motion become,
\begin{equation}
\begin{aligned}
\dot{v} & = -C_D v^2 + \sin\gamma, \\
v\dot{\gamma} & = -C_L v^2 + \cos\gamma,
\end{aligned}
\label{eq:dimensionlessEOM}
\end{equation}
where the dot represents differentiation with respect to the non-dimensional time, $\dot{(\:)}=d/dt$.

We consider these fluid force coefficients as functions of angle of attack only. As shown schematically in Figure \ref{fig:AngleDef}, the angle of attack is given by the sum of the constant pitch angle $\theta$, which is the counter-clockwise angle of the body with respect to the ground, and the glide angle $\gamma$, the clockwise angle of the body's velocity with respect to the ground.
\begin{equation}
	\begin{aligned}
		\dot{v} & = -C_D(\gamma+\theta) v^2 + \sin\gamma, \\
		\dot{\gamma} & = -C_L(\gamma+\theta) v + \frac{1}{v}\cos\gamma.
	\end{aligned}
	\label{eq:glider model}
\end{equation}
We will discuss these functions in more detail in Section \ref{ss:liftdrag}.

Alternatively, we can express this system in an inertial reference frame aligned with an observer on the ground, where $v_x=v\cos\gamma$ is the horizontal velocity and $v_z=-v\sin\gamma$ is the vertical velocity. In terms of these inertial coordinates, the total velocity is given by $v=\sqrt{v_x^2+v_z^2}$ and the glide angle is $\gamma=-\arctan{v_z/v_x}$. Allowing the functional dependence of lift and drag to again be implicit, the equations are given by,
\begin{equation}
	\begin{aligned}
		\dot{v}_x &= v^2\left(C_L\left(\gamma+\theta\right)\sin\gamma - C_D\left(\gamma+\theta\right)\cos\gamma\right), \\
		\dot{v}_z &= v^2\left(C_L\left(\gamma+\theta\right)\cos\gamma + C_D\left(\gamma+\theta\right)\sin\gamma\right) - 1.
	\end{aligned}
	\label{eq:inertial-v1}
\end{equation}
These equations of motion end up being the most convenient to use, where we observe the system in inertial coordinates, but calculate the right hand side in terms of the velocity-polar coordinates.

In terms of these inertial coordinates, this system can also be expressed as,
\begin{equation}
	\begin{aligned}
		\dot{v}_x &= \sqrt{v_x^2+v_z^2}\left(-C_L\left(v_x,v_z,\theta\right)v_z - C_D\left(v_x,v_z,\theta\right)v_x\right), \\
		\dot{v}_z &= \sqrt{v_x^2+v_z^2}\left(~~C_L\left(v_x,v_z,\theta\right)v_x - C_D\left(v_x,v_z,\theta\right)v_z\right) - 1.
	\end{aligned}
	\label{eq:inertial-v2}
\end{equation}

%In our treatment of the problem, we ignore rotational dynamics and assume our glider is able to hold a constant angle with respect to the ground throughout its motion, and therefore the pitch angle \(\theta\) is taken to be a fixed control parameter. In section \ref{sec:time_varying_pitch}, we will consider controlled variations of this parameter to illustrate how it affects the system.

\subsection{The terminal velocity manifold}\label{ss:terminalVelocity}
Terminal velocity is a common notion in popular explanations of fluid forces, defined as the value of velocity which balances wind resistance and gravity such that a falling body can no longer accelerate. We can express this concept mathematically with a one-dimensional model of vertical descent, using the same rescalings considered above in (\ref{eq:dimensionlessEOM}),
\begin{figure}[t]
	\centering
	\includegraphics[width=6.5in]{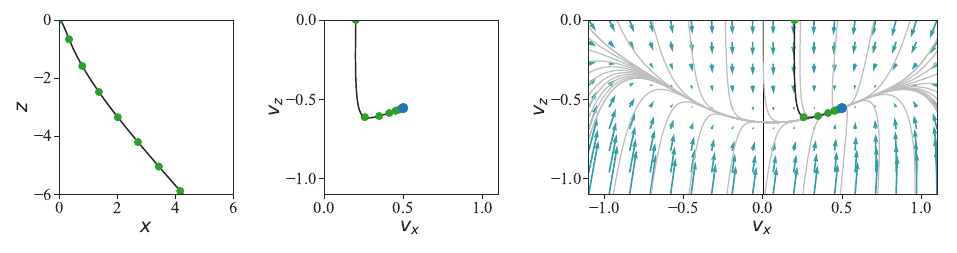}
	\caption{Consider an example glide in position space (left) in which the glider launches with an initial velocity of $\left(v_x(0), v_z(0)\right)=(0.2, 0)$ and the associated velocities of that glide (center). The green points represent even the states at every 1.5 non-dimensional time units. Between the first two green points, the glider accelerates downward due to gravity. After that, the velocities change more slowly toward the equilibrium velocity shown in blue. Looking at the whole velocity space (right), trajectories shown in gray from a variety of initial conditions seem to collapse to a single curve in the velocity space. This attracting curve is the \textit{terminal velocity manifold}. Additionally, the blue vector field shows the magnitude and direction of acceleration at every velocity, while the dark gray vertical line represents $v_x=0$.}
	\label{fig:TVMdemo}
\end{figure}
\begin{equation}\label{eq:TerminalVelocity}
	\dot{v}_z = C_Dv_z^2 - 1.
\end{equation}
This model is identical to (\ref{eq:inertial-v2}) when $v_x=0$, $C_L=0$, and $v_z<0$, because $\sqrt{v^2_z}v_z=-v^2_z$. From this one degree-of-freedom model, terminal velocity is the point where wind resistance (drag) balances gravity, which is the fixed point (\ref{eq:TerminalVelocity}), $v_{T} = -\sqrt{1/C_D}$ in our rescaled coordinates. The terminal velocity point is a zero-dimensional object, serving as a codimension one structure in the one-dimensional model. It acts as a barrier to transport, because all trajectories of velocity approach the fixed point without crossing it. A small initial magnitude of velocity can never become larger than terminal velocity, while a large initial magnitude of velocity can never become smaller than terminal velocity. It divides the one-dimensional phase space into two qualitatively distinct regions; those approaching the terminal velocity from below and those approaching it from above.

As discussed in the introduction, the \textit{terminal velocity manifold}, or TVM, divides phase space into two regions without allowing trajectories to cross it on their way to an ultimate fixed point. The attracting one-dimensional structure in the two-dimensional model of \ref{eq:glider model}, shown in Figure \ref{fig:TVMdemo}, acts as a higher-dimensional analogue of terminal velocity. All trajectories rapidly converge onto the manifold and slowly evolve along it regardless of initial condition, just as all trajectories converge to the point of terminal velocity in the one-dimensional model (\ref{eq:TerminalVelocity}). The difference in time scales onto and along the manifold can be seen by the equally-spaced snapshots in time represented by green dots in Figure \ref{fig:TVMdemo}.

%\section{}
%With the model in section \ref{ss:GliderModel} above, Yeaton et al. \cite{yeaton_global_2017} found that in most of the biologically based lift and drag functions they considered, trajectories in velocity space collapsed onto a single curve as their velocities evolve toward a stable equilibrium glide.  
%In this case, most of the dynamics of an animal's glide lie on or near an attracting curve in velocity space, which we identify as the TVM. However, in their paper, Yeaton et al. \cite{yeaton_global_2017} provide no clear definition to identify the TVM; the authors identify it as the region of phase space with an acceleration smaller than 0.1$g$.

\subsection{Lift and drag as functional parameters}\label{ss:liftdrag}
The behavior of this model depends entirely on the choice of pitch parameter and the lift and drag functions for a given airfoil. For any object without an axisymmetric cross-sectional shape, the fluid forces will depend on the angle-of-attack of the object. As discussed before and shown in Figure \ref{fig:AngleDef}, this angle of attack in our model is the sum of the pitch angle $\theta$ and the glide angle $\gamma$: \(\alpha= \theta+\gamma\). We assume that the fluid force coefficients are independent of increasing Reynolds number.

The lift coefficient is a function which maps from angle of attack, a cyclic variable on $\mathbb{S}^1$, to a finite subset of $\mathbb{R}$, as an unbounded lift coefficient would lead to unbounded acceleration,
\begin{equation}\label{eq:liftCoeff}
C_L:\mathbb{S}^1\mapsto\mathcal{I}_L\subset\mathbb{R}.
\end{equation}
The value may be positive or negative.

The range of the drag function, $\mathcal{I}_D$, on the other hand, must be positive. In a quiescent fluid, a passively falling body cannot produce thrust on its own, and, although small, there must be some small amount of viscous drag on the body,
\begin{equation}\label{eq:dragCoeff}
C_D:\mathbb{S}^1\mapsto\mathcal{I}_D\subset\mathbb{R}^+.
\end{equation}
The details of each function depend on the body's shape. Below, we consider three example systems: a mathematical model for a falling flat plate, an airfoil based on the flying snake \textit{Chrysopelea paradisi}, and experimental measurements of a NACA-0012 airfoil.

%The functions that define lift and drag coefficients are smooth functional parameters which operate on a cyclic variable, $\alpha$. That is $C_L, C_D \in C^\infty(\mathcal{S}^1)$. Symmetries in the glider's cross-sectional shape impose conditions on these functions, based the way the shape encounters the incoming flow. If the cross-sectional shape of the glider is vertically symmetric (mirrored across the horizontal), then lift must be an odd function $\alpha=0$, that is $C_L(\alpha) = -C_L(-\alpha)$, and drag must be an even function, that is $C_D(\alpha)=C_D(-\alpha)$. 
\subsubsection{Falling flat plate}
The motion of a falling flat plate has been researched extensively in the context of insect flight, falling leaves, and falling paper \cite{wang2004unsteady,andersen_analysis_2005,andersen_unsteady_2005,ern2012wake,tam2010tumbling}. It provides a simple shape which can exhibit a wide range of behaviors from varying only a few parameters. For a holistic look at this problem, a pair of papers by Andersen, Pasavento, and Wang \cite{andersen_unsteady_2005,andersen_analysis_2005} investigated this problem experimentally and computationally to develop a phenomenological model. In their investigation, the authors use the results of a previous paper \cite{wang2004unsteady} which considered the quasi-steady lift and drag on a flat plate for a range of angles-of-attack and found that lift and drag coefficients can be approximated simply by,
\begin{equation}
	\begin{aligned}
		C_D(\alpha) &= 1.4-\cos\left(2\alpha\right), \\
		C_L(\alpha) &= 1.2\sin\left(2\alpha\right).
	\end{aligned}
\end{equation}
These functions are illustrated in Figure \ref{fig:ExampleComp}b. This model is based on the results of direct numerical simulation for a thin elliptical plate at $Re=100$ \cite{wang2004unsteady}, but represent a simple, analytical expression for lift and drag to develop our methods. Drag is at a minimum where the flat plate is horizontal to the incoming air and at a maximum at $\alpha=90^\circ$, while lift vanishes at $0^\circ$ and $90^\circ$, while reaching a maximum at $45^\circ$. Using this model for lift and drag in the equations of motion found in (\ref{eq:inertial-v1}), we can analyze the velocity space for a single flat plate falling through a fluid due to gravity, shown in Figure \ref{fig:ExampleComp}c for $\theta=-5^\circ$.

\begin{figure}[t]
	\centering
	\includegraphics[width=\textwidth]{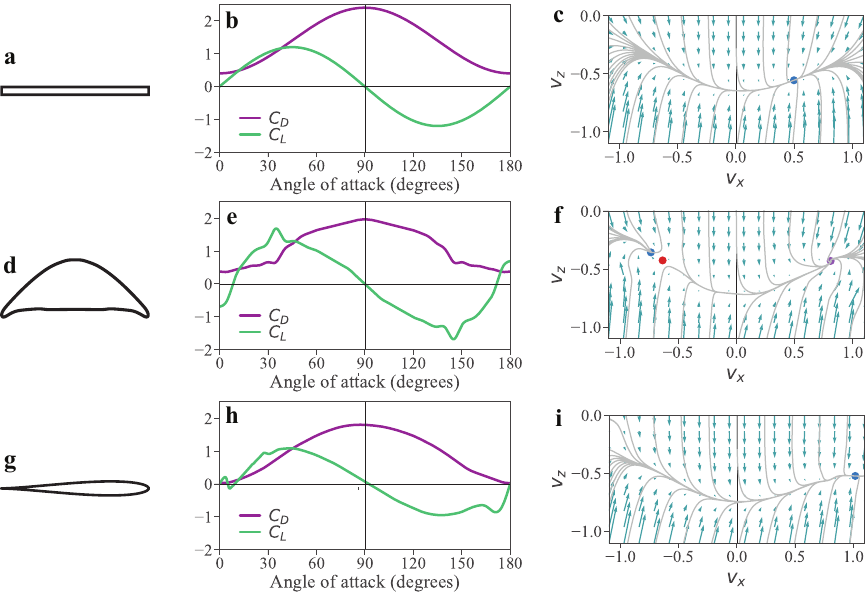}
	\caption{Comparison of the example airfoils considered in this paper. For each airfoil, the lift and drag curves are shown over the interval $\alpha \in (0^\circ, 180^\circ)$. The symmetry of drag and antisymmetry of lift for the flat plate (panel b) and flying snake airfoils (panel e) about $\alpha=90^\circ$ are evident. The NACA airfoil exhibits this same symmetry and anti-symmetry, but about $\alpha=0^\circ$ in panel h. In panels c, f, and i, the acceleration at each velocity is shown by the blue arrows and example trajectories are shown in gray for an example with a fixed pitch angle of $\theta=-5^\circ$.}
	\label{fig:ExampleComp}
\end{figure}

\subsubsection{The flying snake airfoil: the body shape of \textit{Chrysopelea paradisi}}
As a biologically motivated example, we consider the cross-sectional body shape for \textit{Chrysopelea paradisi}, pictured in Figure \ref{fig:ExampleComp}d \cite{socha2011gliding}. During the glide, the snake expands its ribs to form an airfoil-like shape that is horizontally symmetric. Holden et al. \cite{holden_aerodynamics_2014} determined the aerodynamic characteristics of this shape by 3-D printing the extruded cross-section and measuring its lift and drag in a water channel. Measurements were conducted over angles of attack from $-10^\circ$ to $60^\circ$. A variety of methods have been applied to understand and model this animal's behavior \cite{socha2002kinematics,socha_non-equilibrium_2010,jafari_theoretical_2014,krishnan2014lift,socha2015animals,yeaton_global_2017,jafari2017control}. Here, we analyze how a fixed glider with the snake's characteristics would behave, similar to the work of Jafari et al.\ \cite{jafari_theoretical_2014}. This simplified analysis will give insight into the more complex behavior of the snake itself. Its lift and drag characteristics are shown in Figure \ref{fig:ExampleComp}e, and the resulting phase space is shown in Figure \ref{fig:ExampleComp}f for $\theta=-5^\circ$.

\subsubsection{The NACA-0012 airfoil}
As a representative of engineered gliding systems, we consider the example of a NACA-0012 airfoil. It is a vertically symmetric airfoil used in a variety of aircraft with a maximum thickness equal to 12\% of its chord length, as shown in Figure \ref{fig:ExampleComp}g. Its lift and drag characteristics, shown in Figure \ref{fig:ExampleComp}h, were measured in a wind tunnel for angles of attack ranging from $0^\circ$ to $180^\circ$ for application to vertical axis wind turbines \cite{sheldahl1981aerodynamic}. Lift increases near $\alpha=0^\circ$ before it drops off at the point of stall, then, lift continues to increase until approximately $\alpha=45^\circ$. The resulting phase space is shown in Figure \ref{fig:ExampleComp}i for $\theta=-5^\circ$.

%\subsection{The terminal velocity manifold in phase space}
%To identify the terminal velocity manifold, we take a look at the trajectories in velocity space on the right hand panels of Figure \ref{fig:ExampleComp}. Each presented example contains a slightly negative pitch angle of $\theta=-5^\circ$, and a glide that begins initially at rest, represented by the center point of the top of the figure, will ultimately end with a positive horizontal velocity and a negative vertical velocity. That is to say, at this pitch angle, all three example airfoil shapes will lead a glider dropped from rest to a steady forward glide. In order to get to this steady forward glide, the trajectory in velocity space is initially dominated by downward acceleration due to gravity. When dropped from rest $(v_x(0),v_z(0))=(0,0)$, initial acceleration is purely due to gravity $\dot{v}_x=0,\,\dot{v}_z=-1$. After the initial downward acceleration, a trade-off begins between vertical velocity and horizontal velocity. 

%When taking a look at all trajectories in phase space, it is clear they converge onto a single curve. Accelerations along this curve are much slower than the accelerations onto the curve itself. All trajectories must pass through the same velocities to get to the equilibrium. We identify this slow curve in velocity space as the terminal velocity manifold.

\subsection{Shape symmetry and force coefficients}
There are a number of symmetries present in the examples that we have chosen to study here. Both the snake airfoil and flat plate exhibit left-right symmetry, the NACA-0012 airfoil and flat plate exhibit top-bottom symmetry, and the flat plate also shows $180^\circ$ rotational symmetry. Each of these symmetries has natural consequences for the functional symmetries of the fluid force coefficients.

The symmetry of rotation by $180^\circ$, as observed in the flat plate example, means that the shape at angle-of-attack $\alpha$ is the same as the shape at $\alpha+180^\circ$ for all angles of attack. Therefore,
\begin{equation}
	\begin{aligned}
		C_L(\alpha) &= C_L(\alpha+180^\circ), \\
		C_D(\alpha) &= C_D(\alpha+180^\circ).
	\end{aligned}
\end{equation}
These coefficients are therefore cyclic with period $180^\circ$ rather than $360^\circ$ for all other shapes. This fact causes the above model for the flat plate to depend on sinusoidal functions of $2\alpha$.

The top-bottom symmetry of the NACA-0012 airfoil and flat plate means that drag is the same whether the airfoil is pitched up or down, and that lift is exactly opposite for upward or downward pitch. These properties correspond to the properties of even and odd functions, respectively. Therefore,
\begin{equation}
	\begin{aligned}
		C_L(\alpha) &= -C_L(-\alpha), \\
		C_D(\alpha) &= ~~C_D(-\alpha).
	\end{aligned}
\end{equation}
As a corollary, we find that $C_L(0)=0$ for systems with this symmetry. This result follows naturally from $C_L(\alpha)=-C_L(-\alpha)$ when $\alpha=0$.

Finally, the left-right symmetry of the snake and the flat plate is equivalent to the top-bottom symmetry, but rotated by $90^\circ$.
\begin{equation}
	\begin{aligned}
		C_L\left(90^\circ+\alpha\right) &= -C_L\left(90^\circ-\alpha\right), \\
		C_D\left(90^\circ+\alpha\right) &= ~~C_D\left(90^\circ-\alpha\right).
	\end{aligned}
\end{equation}
Following the same logic as above, it's clear that $C_L(90^\circ)=0$ for systems with left-right symmetry.

\section{Equilibrium points of the system}\label{s:equilibria}
We begin our analysis of the phase space structure of the glider model by looking at the possible equilibrium glide points of the system. A criterion for these equilibrium points may be found in the expression for horizontal acceleration $\dot{v}_x$ of the glider from (\ref{eq:inertial-v1}). Each fixed point must correspond with zero acceleration, and therefore, $v^2\left(C_L\sin\gamma-C_D\cos\gamma\right)=0$. This expression is only zero when $v=0$ or $C_L\sin\gamma-C_D\cos\gamma=0$. When $v=0$, the vertical acceleration is given by gravity, $\dot{v}_z=-1$, so this point does not correspond to a fixed point. Therefore, any equilibrium points require the following condition on glide angle $\gamma^*$,
\begin{equation}\label{eq:fixed-point-condition}
	\cot\gamma^* = \frac{C_L}{C_D}\left(\gamma^*+\theta\right).
\end{equation}
The condition for the magnitude of velocity at the fixed point is found by setting $\dot{v}_z=0$ in (\ref{eq:inertial-v1}), which gives the $v_z$-nullcline, which will be discussed in more detail in Section \ref{ss:nullcline}.
%\begin{equation}
%	v^* = \left(C_L\left(\gamma^*+\theta\right)\cos\gamma^* + C_D\left(\gamma^*+\theta\right)\sin\gamma^*\right)^{-\frac{1}{2}}.
%\end{equation}

There will always exist one fixed point on the interval $\gamma \in (0,\pi)$, the lower half plane of velocity space. To show this, we must make two physical inferences, one regarding lift and one regarding drag, which we have already introduced in Section \ref{ss:liftdrag}. Since the lift coefficient is a mathematical model of a real fluid force, an infinite lift coefficient would be physically unreasonable, and therefore this function maps to the finite interval $\mathcal{I}_L\subset\mathbb{R}$ as in (\ref{eq:liftCoeff}). Secondly, as discussed above, in a quiescent field of fluid with a quasi-steady fluid force, it is impossible for a single body to generate negative drag, or thrust. Furthermore, there must be at least some viscous drag on a body moving through a fluid. Therefore, although it may be small, the range of the drag coefficient, $\mathcal{I}_D$, must be a subset of the positive reals, as in (\ref{eq:dragCoeff}). Thus, it is physically reasonable to assume that the lift coefficient function is finite, and the drag coefficient function is everywhere positive.

\begin{figure}[t]
	\centering
	\includegraphics[height=2.5in]{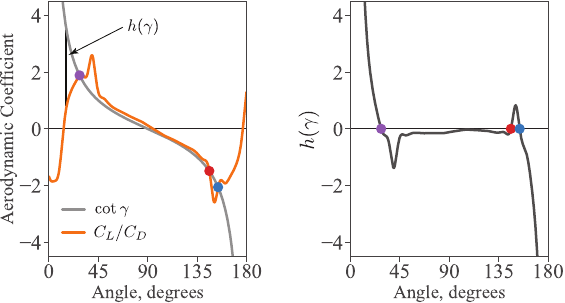}
	\caption{Graphical representation of (\ref{eq:DifferenceEquation}). Every intersection between the two functions, or equivalently, every zero crossing of $h(\gamma)$, represents a fixed point of the system. This example is for the flying snake airfoil at pitch angle $\theta=-5^\circ$, matching the phase space shown in Figure \ref{fig:ExampleComp}f.}
	\label{fig:ProofFigure}
\end{figure}

\begin{theorem}
	For a two-dimensional glider model as conceptualized in (\ref{eq:inertial-v1}) with a smooth function with positive image for drag coefficient and a smooth function with finite range for lift coefficient, there must be at least one fixed point on the open domain $D=\{\gamma~\vert~\gamma \in (0,\pi)\}$.
\end{theorem}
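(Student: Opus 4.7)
The plan is to set up the fixed-point condition (\ref{eq:fixed-point-condition}) as a single scalar equation in $\gamma$ and apply the Intermediate Value Theorem. Define
\begin{equation*}
h(\gamma) = \cot\gamma - \frac{C_L(\gamma+\theta)}{C_D(\gamma+\theta)},
\end{equation*}
so that the $\gamma$-components of equilibria in $(0,\pi)$ correspond precisely to zeros of $h$ on that interval, as pictured in Figure~\ref{fig:ProofFigure}. Because $C_D$ is smooth and strictly positive everywhere, the ratio $C_L/C_D$ is a smooth, bounded function of $\gamma$ on all of $\mathbb{R}$; and $\cot\gamma$ is smooth on the open interval $(0,\pi)$. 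Thus $h$ is continuous on $(0,\pi)$, which is all I will need for the IVT argument.

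Next I would examine the two endpoints. As $\gamma\to 0^+$, $\cot\gamma\to +\infty$, while $C_L/C_D$ remains bounded by the finite-lift and positive-drag hypotheses, so $h(\gamma)\to +\infty$. Symmetrically, as $\gamma\to\pi^-$, $\cot\gamma\to -\infty$ and $C_L/C_D$ is again bounded, giving $h(\gamma)\to -\infty$. Continuity on $(0,\pi)$ together with opposite signs near the endpoints yields, via the Intermediate Value Theorem, at least one $\gamma^*\in(0,\pi)$ with $h(\gamma^*)=0$, i.e.\ satisfying (\ref{eq:fixed-point-condition}).

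To confirm this $\gamma^*$ corresponds to an honest fixed point of the full system (\ref{eq:inertial-v1}), I would then solve for the accompanying speed by imposing $\dot v_z=0$. Substituting the relation $C_L\sin\gamma^*=C_D\cos\gamma^*$ into $v^2(C_L\cos\gamma^*+C_D\sin\gamma^*)=1$ collapses the bracket to $C_D(\gamma^*+\theta)/\sin\gamma^*$, so
\begin{equation*}
v^*=\sqrt{\frac{\sin\gamma^*}{C_D(\gamma^*+\theta)}}.
\end{equation*}
Since $\gamma^*\in(0,\pi)$ implies $\sin\gamma^*>0$ and $C_D>0$ by assumption, $v^*$ is real and strictly positive, so $(v^*,\gamma^*)$ is a genuine equilibrium of (\ref{eq:glider model}), equivalently of (\ref{eq:inertial-v1}).

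The core argument is really just a one-line IVT, so there is no serious obstacle; the only place one must be slightly careful is the verification that the limits of $h$ at the endpoints are dictated solely by $\cot\gamma$, which is precisely where the boundedness of $C_L/C_D$ (i.e.\ the finite-range lift and strictly-positive drag hypotheses) does the work. Without either hypothesis the limits could fail, so those assumptions cannot be dropped. I would also note in passing that the argument gives no uniqueness—$h$ can oscillate across zero many times depending on the airfoil, which matches the multiple intersections one sees in Figure~\ref{fig:ProofFigure} for more complex lift and drag curves.
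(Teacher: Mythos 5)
Your proposal is correct and follows essentially the same route as the paper: the same function $h(\gamma)=\cot\gamma-\frac{C_L}{C_D}(\gamma+\theta)$, the same observation that finite lift and strictly positive (smooth, hence bounded-below on $S^1$) drag make the ratio bounded so the endpoint limits $h\to+\infty$ as $\gamma\to 0^+$ and $h\to-\infty$ as $\gamma\to\pi^-$ are dictated by $\cot\gamma$, and the same Intermediate Value Theorem conclusion. Your added verification that the speed $v^*=\sqrt{\sin\gamma^*/C_D(\gamma^*+\theta)}$ is real and positive is a small but genuine improvement in completeness---the paper relegates this to the discussion of the $v_z$-nullcline preceding the theorem rather than the proof itself.
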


\begin{proof}
	Consider the condition for equilibrium points given above in (\ref{eq:fixed-point-condition}). This condition can be arranged to create a function $h:D \rightarrow \mathbb{R}$.
	\begin{equation}\label{eq:DifferenceEquation}
		h(\gamma) = \cot\gamma-\frac{C_L}{C_D}\left(\gamma+\theta\right)
	\end{equation}
	Any value of $\gamma$ such that $h(\gamma)=0$ corresponds to a fixed point $\gamma^*$ satisfying (\ref{eq:fixed-point-condition}).
	
	From our assumptions about the lift and drag coefficient, that lift is finite and drag is positive and therefore nonzero, we may infer that the lift to drag ratio $\frac{C_L}{C_D}\left(\gamma+\theta\right)$ is itself finite. Thus, our function $h(\gamma)$ is dominated by the contribution of $\cot\gamma$ at both endpoints of the domain.
	\begin{equation}
		\begin{aligned}
			\lim\limits_{\gamma\rightarrow 0^+}h(\gamma) &= \lim\limits_{\gamma\rightarrow 0^+}\cot\gamma \to ~\infty \\
			\lim\limits_{\gamma\rightarrow \pi^-}h(\gamma) &= \lim\limits_{\gamma\rightarrow \pi^-}\cot\gamma \to -\infty
		\end{aligned}
	\end{equation}
	Since $h(\gamma) \rightarrow \infty$ as $\gamma\to 0$ and $h(\gamma)\rightarrow -\infty$ as $\gamma\to\pi$, by the Intermediate Value Theorem (IVT), there must be a point in between such that $h(\gamma)=0$. Therefore, there must be at least one fixed point on the interval $D$.
\end{proof}

By this theorem, there must be some downward ($v_z<0$) equilibrium point for any lift and drag functions obeying the quite general criteria of (\ref{eq:liftCoeff}) and (\ref{eq:dragCoeff}). This function that we have defined, $h(\gamma)$, has the important property of being topologically conjugate to the acceleration along the terminal velocity manifold. Furthermore, we can show that there must be an odd number of fixed points on the interval $D$ outside of edge cases where the function grazes the $0$-line, that is, $h=0$ and $dh/d\gamma=0$.

\begin{theorem}\label{odd_points}
	Outside of those cases where $h(\gamma^*)=0$ and $\left.dh/d\gamma\right|_{\gamma=\gamma^*}=0$, it is guaranteed that there are an odd number of equilibrium points on the domain $D=\{\gamma~\vert~\gamma \in (0,\pi)\}$. 
\end{theorem}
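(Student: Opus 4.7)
The plan is to leverage the boundary behavior of $h$ established in the preceding theorem and convert the excluded ``grazing'' hypothesis into a transverse-zero condition, after which counting sign changes by parity yields the odd count. The key insight is that ruling out zeros where both $h(\gamma^*)=0$ and $dh/d\gamma|_{\gamma^*}=0$ is exactly the transversality condition which forces every zero of $h$ to be a strict sign change, and so the zeros alternate the sign of $h$ along the interval.

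First I would observe that under the hypothesis, every zero $\gamma^*\in D$ of $h$ satisfies $dh/d\gamma|_{\gamma=\gamma^*}\neq 0$, so by the inverse function theorem each $\gamma^*$ is an isolated zero of $h$ at which $h$ strictly changes sign. Second, I would argue that the zero set is finite. Isolation rules out accumulation of zeros in the interior of $D$, while the limits $h(\gamma)\to +\infty$ as $\gamma\to 0^+$ and $h(\gamma)\to -\infty$ as $\gamma\to\pi^-$ from the previous theorem supply collar neighborhoods $(0,\delta)$ and $(\pi-\delta,\pi)$ on which $h$ has a definite sign and hence no zeros, so accumulation at the endpoints is also excluded. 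Listing the zeros in increasing order as $\gamma_1<\gamma_2<\cdots<\gamma_n$, each is therefore a simple, sign-changing zero.

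The final step is a straightforward parity count. The sign of $h$ on $(0,\gamma_1)$ is positive by the boundary limit at $0^+$, the sign of $h$ on $(\gamma_n,\pi)$ is negative by the boundary limit at $\pi^-$, and on each intermediate subinterval $(\gamma_k,\gamma_{k+1})$ the sign is constant (since $h$ has no further zeros there) and opposite to the sign on its neighbors (since $h$ flips sign at each $\gamma_k$). Passing from $+$ to $-$ through $n$ strict sign alternations therefore forces $n$ to be odd, which is the desired conclusion. Since each zero of $h$ corresponds to an equilibrium glide angle $\gamma^*$ via equation~(\ref{eq:fixed-point-condition}), this translates directly into an odd number of equilibria on $D$.

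The main obstacle, such as it is, lies in the finiteness step: a priori one could imagine an infinite sequence of zeros accumulating somewhere in $D$, which would render the parity statement meaningless. This is exactly what the exclusion of the tangent ``$h=h'=0$'' case rules out, in combination with the endpoint blow-up of $\cot\gamma$ inherited from the previous theorem. Once finiteness of the zero set is secured, the remainder of the argument is essentially an intermediate-value-theorem parity bookkeeping and involves no further subtlety.
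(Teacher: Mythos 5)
Your proof is correct, and it is organized along a genuinely different line than the paper's, even though both ultimately run on the same engine (the non-grazing hypothesis forces every zero of $h$ to be a simple, strict sign change, and the endpoint blow-up of $\cot\gamma$ pins the signs $+$ at $0^+$ and $-$ at $\pi^-$). The paper argues by pairwise case analysis and induction: given two ordered equilibria with $\left.dh/d\gamma\right|_{\gamma=\gamma_1^*}<0$, it splits on the sign of $\left.dh/d\gamma\right|_{\gamma=\gamma_2^*}$ and invokes the IVT to manufacture a third zero either in $(\gamma_2^*,\pi)$ or in $(\gamma_1^*,\gamma_2^*)$, then asserts ``four implies a fifth and so on.'' You instead list the zeros in order and count sign alternations from $+$ to $-$, concluding directly that the number of crossings is odd. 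Your decomposition buys two things the paper leaves implicit. First, finiteness of the zero set: your observation that an accumulation point of zeros would, by continuity and the difference-quotient limit, itself satisfy $h=0$ and $dh/d\gamma=0$ --- precisely the excluded grazing case --- together with the sign-definite collars at the endpoints, is needed before ``an odd number'' is even well defined, and the paper never addresses it. Second, your global count sidesteps the bookkeeping fragility of the paper's induction (when the chosen pair $\gamma_1^*,\gamma_2^*$ is not adjacent, the ``third'' zero produced by the IVT may be one already counted, so the inductive step as stated needs repair, whereas your alternation argument needs none). As a bonus, your bookkeeping immediately yields the paper's follow-up remark: since $h$ starts positive, the even-indexed zeros are exactly those with $\left.dh/d\gamma\right|_{\gamma=\gamma^*}>0$, i.e.\ $\Delta<0$, recovering the claim that every even fixed point is a saddle. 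The paper's version, for its part, stays closer to the geometric picture of Figure~\ref{fig:ProofFigure}, but yours is the tighter and more complete argument.
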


\begin{proof}
Assume that there at least two equilibrium points $\gamma_1^*,\gamma_2^*\in (0, \pi)$ with $\gamma_1^*<\gamma_2^*$ and that $\left.dh/d\gamma\right|_{\gamma=\gamma_1^*}<0$.

If $\left.dh/d\gamma\right|_{\gamma=\gamma_2^*}>0$, then $h(\gamma_2^*+\epsilon)>0$ for a small $\epsilon>0$. Since, from the previous proof, $h(\pi)\rightarrow -\infty$, there must be a third fixed point $\gamma_3^*$ on the interval $(\gamma_2^*, \pi)$ by the IVT. Conversely, if $\left.dh/d\gamma\right|_{\gamma=\gamma_2^*}<0$, then $h(\gamma_2^*-\epsilon)>0$ for a small $\epsilon>0$. Under our assumption, $\left.dh/d\gamma\right|_{\gamma=\gamma_1^*}<0$ which implies $h(\gamma_1^*+\epsilon)<0$. In this case, there must be a third fixed point $\gamma_3^*$ on the interval $(\gamma_1^*, \gamma_2^*)$ by the IVT.

This same logic applies if we assume $\left.dh/d\gamma\right|_{\gamma=\gamma_2^*}<0$ and consider both cases of $\left.dh/d\gamma\right|_{\gamma=\gamma_1^*}$. Therefore, if there are at least two equilibrium points with nonzero derivatives of $h(\gamma)$, there must be a third. By the same argument, four equilibrium points implies a fifth and so on.
\end{proof}

Furthermore, if a bifurcation occurs which transitions from one fixed point to three, the two new equilibrium points must have opposite derivatives. That is, $\left.dh/d\gamma\right|_{\gamma=\gamma^*}>0$ for one fixed point and $\left.dh/d\gamma\right|_{\gamma=\gamma^*}<0$ for the other. If we investigate this expression, we find that,
\begin{equation}\label{eq:dh-dgamma}
\left.\frac{dh}{d\gamma}\right|_{\gamma=\gamma^*} = -\csc^2\gamma^* - \left(\frac{C_L}{C_D}\right)'\left(\gamma^*+\theta\right),
\end{equation}
where $(\cdot)'$ denotes the derivative of a function with respect to its argument.

We can use the Jacobian of the equations of motion (see eq.\ \eqref{linearization_matrix_A} in Appendix \ref{appendix}) to express the conditions for different types of equilibrium points. These conditions are expressed in terms of two variables, $\tau=C_L'/C_D+3$ and $\Delta=1+(C_L/C_D)^2+(C_L/C_D)'$ (see also eq.\ \eqref{tau_and_delta} in Appendix \ref{appendix}). Using trigonometric identities and (\ref{eq:fixed-point-condition}), eq. (\ref{eq:dh-dgamma}) can be rearranged to give,
\begin{equation}
\left.\frac{dh}{d\gamma}\right|_{\gamma=\gamma^*} = -1 - \left(\frac{C_L}{C_D}\right)^2\left(\gamma^*+\theta\right) - \left(\frac{C_L}{C_D}\right)'\left(\gamma^*+\theta\right) = -\Delta.
\end{equation}
If $\Delta<0$, the fixed point must be of saddle type. Therefore, if there exist more than one fixed point, $\gamma^*_i$, on the interval $\gamma\in(0,\pi)$ ordered such that $\gamma^*_1<\gamma^*_2<\cdots$, then every even fixed point must be of saddle type. This can be seen below in Figure \ref{fig:Bifurcation}. Along every vertical slice, there are an odd number of fixed points, and anytime there are more than one, the even fixed points are saddle points, denoted by the red points.

From the criterion given by (\ref{eq:fixed-point-condition}), we show the numerically computed bifurcation diagrams for the three examples considered in this work below in Figure \ref{fig:Bifurcation}. These bifurcation diagrams are numerically found by pseudo-arclength continuation, and show the critical glide angle for each fixed point, $\gamma^*$, as a function of the pitch angle parameter $\theta$. The colors indicate the equilibrium type, with blue points signifying stable nodes, purple points signifying sable foci, and red points signifying saddle points.

Note that this proof only deals with the appearance and number of fixed points, and does not preclude the possibility of Hopf bifurcations, which are possible within this model and discussed in Appendix \ref{appendix}. 
The global bifurcation which connects the terminal velocity manifold and limit cycles is beyond the scope of the present paper and %appearance and character of such bifurcations 
is left to future work.
In fact, for the three example airfoils considered here, no Hopf bifurcation occurs, although it could be possible for other airfoils \cite{yeaton_global_2017}.

\begin{figure}[t]
\centering
\includegraphics[width=6.5in]{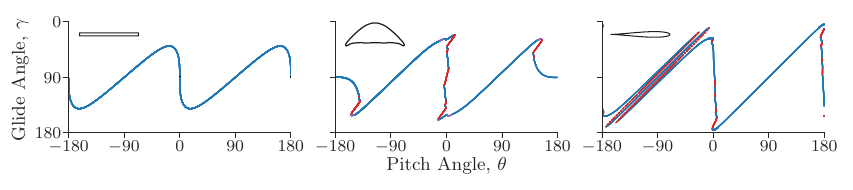}
\caption{The bifurcation diagram showing the glide angle of the equilibrium points $\gamma^*$ at each pitch angle $\theta$ for the flat plate(left), the flying snake cross section(center), and a NACA-0012 airfoil(right). The $\gamma$-axis is flipped such that forward equilibrium glides ($v_x>0$, $\gamma^*<90^\circ$) are located at the top while backward equilibrium glides ($v_x<0$, $\gamma^*>90^\circ$) are located in the lower half of each panel. The type of fixed point is indicated by the color. Blue indicates a stable node, purple indicates a stable focus, red indicates a saddle point, and black indicates a center fixed point.}
\label{fig:Bifurcation}
\end{figure}

\section{Detecting the terminal velocity manifold}\label{s:TVMdetection}
With a better understanding about the behavior of changes in fixed points, we turn our attention to the terminal velocity manifold. From the phase space of the system shown in the right panels of Figure \ref{fig:ExampleComp} (c, f, and i), we can observe several properties of the TVM. As mentioned in the introduction, the TVM is an example of an attracting normally hyperbolic invariant manifold \cite{wiggins2013normally}, meaning that at every point on the manifold, the eigenvalue of the linearized system normal to the manifold has a negative real part. Based on the properties of this structure, we employ a variety of methods to identify it. 

All fixed points of the system lie along the TVM, we will investigate this structure as it relates to the stable and unstable manifolds of fixed points \cite{koon2008dynamical}. Additionally, as shown schematically in Figure \ref{fig:AttractingSubmanifold}, motion along the manifold is slower than motion onto the manifold, giving it the behavior of a slow manifold \cite{kuehn2016multiple}, so we will first approximate it using the $v_z$-nullcline, which serves as a proxy to a critical manifold. As discussed in Section \ref{ss:terminalVelocity}, the TVM acts as a barrier to transport in the velocity space, so we next employ a bisection method to find this structure. Finally, as the TVM is a globally attracting structure, we apply a method based on the attracting behavior of trajectories called the trajectory-normal repulsion rate \cite{haller_variational_2011,nave2019trajectoryfree}.

%Since all trajectories in our velocity space collapse to a single one-dimensional invariant manifold, we can gain an understanding of the system by focusing our study on the motion along this structure. To reduce the dynamics to this lower-dimensional structure, the first step is to develop techniques to identify the manifold. In this section, we introduce several techniques to numerically approximate the TVM in phase space.

\subsection{Stable manifold expansion}
As we have shown in Section \ref{s:equilibria}, there must always be at least one equilibrium point on the interval $(0, \pi)$ and always an odd number of equilibrium points. The difference in time scales of motion onto and along the TVM is related to the magnitude of the eigenvalues of the Jacobian at the equilibrium points. The global terminal velocity manifold, then, is associated with the stable and unstable manifolds of the fixed points \cite{krauskopf2005survey}. Figure \ref{fig:AttractingSubmanifold} shows a schematic of the stable and unstable manifolds for an example system with two stable nodes and one saddle point. %In Figure \ref{fig:Bifurcation}, it can be seen that at every pitch angle $\theta$ in our examples, there is at least one stable equilibrium point.

\begin{figure}[t]
	\centering
	\includegraphics[width=0.7\textwidth]{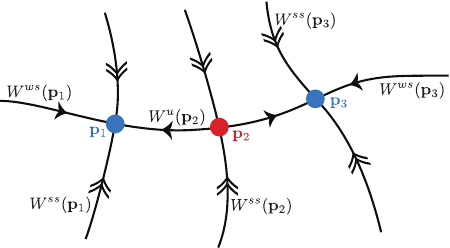}
	\caption{A schematic representation of the terminal velocity manifold as an attracting manifold in the context of the union of the unstable and weakly stable manifolds of saddle ($p_2$) and stable node points ($p_1,p_3$), respectively.}
	\label{fig:AttractingSubmanifold}
\end{figure}

For stable nodes, such as $p_1$ and $p_3$ in the figure, the stable manifold $W^s$ is two-dimensional. However, the stable manifold contains two one-dimensional embedded submanifolds, which are the strong stable $W^{ss}\subset W^{s}$ and weak stable $W^{ws}\subset W^{s}$ submanifolds of each point. These are the nonlinear expansions of the strong stable $\ee_{ss}$ and weak stable $\ee_{ws}$ eigenvectors of the fixed point (see Appendix \ref{stable_node} for details). The associated eigenvalues must have the ordering $\lambda^{ss}<\lambda^{ws}<0$. For the saddle points, such as $\mathbf{p}_2$, there remains an ordering of the magnitudes of the eigenvalues $0<\lambda^u<-\lambda^{ss}$ (see Appendix \ref{saddle} for details). However, in this case, the strong stable submanifold represents the entirety of the stable manifold $W^{ss}=W^{s}$. From Section \ref{s:equilibria}, the even fixed points are saddle points. The terminal velocity manifold is, in general, the union of the weak stable manifolds of all stable equilibrium points in the system.

The most logical method for extracting the TVM, then, is the semi-analytical method of integrating the weak stable eigendirection $\ee_{ws}$ of each stable fixed point backward in time \cite{koon2008dynamical,naik2017geometry}. However, because of the dominance of the strong stable eigenvalue $\lambda^{ss}<\lambda^{ws}$, in backward time the strong stable direction becomes a strong unstable direction. Any deviation from the weak stable submanifold $W^{ws}$ in backward time will lead to the trajectory effectively peeling off of the weak stable submanifold. Integrating the unstable manifold of saddle fixed points $W^u$ in forward time is able to identify the TVM between any stable fixed points. One can also find the TVM via a higher order expansion, which also provides the one-dimensional vector field along the TVM (see Appendix \ref{expansion} for details). However, one may need to go to an unrealistically high order to get the expansions to converge, so to take a global approach to the TVM, a method other than manifold expansion is required.
%As we discussed previously, if there is more than one fixed point in our system, then every even fixed point is of saddle type. In these cases, the unstable manifold of the saddle equilibrium points can be identified with the stable manifold of the stable equilibrium points. 

In the prior study by Yeaton et al.\ \cite{yeaton_global_2017}, the authors present the low order polynomial expansion of the unstable manifold in the neighborhood of equilibrium points in this system and the acceleration along it. This local approach is very successful at capturing the terminal velocity manifold near a fixed point. However, it is not able to accurately predict the TVM further from the fixed point, where higher order terms may no longer be neglected. Therefore, it is necessary to find a global approach for calculating the TVM if we are to analyze how the terminal velocity manifold alters with changes in both our functional lift and drag parameters and pitch parameter.

\subsection{The $v_z$-nullcline}\label{ss:nullcline}
The TVM shows a separation in time scales of motion, as schematically shown in Figure \ref{fig:AttractingSubmanifold}, giving it the structure of an attracting slow manifold. However, the system given by (\ref{eq:inertial-v1}) has no explicit slow parameter as in classical examples with slow-fast dynamics \cite{fenichel1979geometric,jones1995geometric,kuehn2016multiple}. The search for an implicit slow parameter is left to possible future work. As a first step to identifying the terminal velocity manifold with a global approach, we follow Yeaton et al.\ \cite{yeaton_global_2017} and present the $v_z$-nullcline as an initial approximation. Although there is no explicit slow parameter in this system, this resembles the calculation of the critical manifold in a slow-fast system \cite{kuehn2016multiple}. Similar to a critical manifold, the $v_z$-nullcline remains near the attracting manifold observed from trajectories, but the two do not necessarily coincide.

The $v_z$-nullcline may be found by setting $\dot{v}_z=0$. The locus of points can be calculated directly from the second part of (\ref{eq:inertial-v1}),
\begin{equation*}
\dot{v}_z = v^2(C_L\cos\gamma + C_D\sin\gamma) - 1 = 0.
\end{equation*}
Rearranging this equation gives a straightforward expression written in terms of the tangential-normal coordinates for convenience,
\begin{equation}\label{eq:vznullcline}
v = \left(C_L\cos\gamma + C_D\sin\gamma\right)^{-\frac{1}{2}}.
\end{equation}
This can be written parametrically from the definition of $v_x$ and $v_z$ as
\begin{equation}\label{eq:vznullParametric}
\begin{aligned}
v_x &= ~~\left(C_L\cos\gamma + C_D\sin\gamma\right)^{-\frac{1}{2}}\cos\gamma, \\
v_z &= -\left(C_L\cos\gamma + C_D\sin\gamma\right)^{-\frac{1}{2}}\sin\gamma.
\end{aligned}
\end{equation}
We consider the $v_z$-nullcline to be given as the range of glide angles $\gamma$ which satisfy (\ref{eq:vznullcline}) between singular values. The singular values occur where the denominator of (\ref{eq:vznullcline}) goes to zero, given by
\begin{equation}\label{eq:singularvznullcline}
\gamma_{s}=\arctan\left(-\frac{C_L}{C_D}\right).
\end{equation}

\begin{figure}[t]
\centering
\includegraphics[width=6.5in]{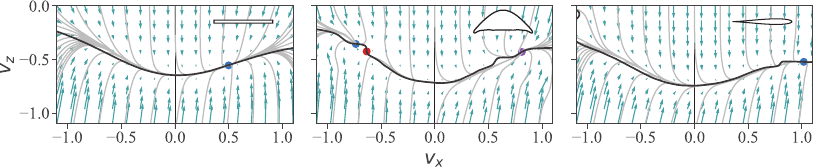}
\caption{The $v_z$-nullcline as an approximation to the terminal velocity manifold for the flat plate=(left), the flying snake cross section(center), and a NACA-0012 airfoil(right) at pitch angle of $\theta=-5^\circ$. The nullcline remains close to the most attracting curve, but does not lie along it.}
\label{fig:nullcline}
\end{figure}

The $v_z$-nullclines compared against the trajectories for all three of our example gliders are shown in Figure \ref{fig:nullcline}. As previously discussed, the TVM is the curve onto which all trajectories collapse. However, in Figure \ref{fig:nullcline}, it is clear that many trajectories pass through the $v_z$-nullcline. Therefore, although the TVM and $v_z$-nullcline are close to one another, they are not the same curve. As the terminal velocity itself is an invariant manifold on which motion is slow and all acceleration is tangential and not generally perpendicular to the $v_z$ direction, then the TVM is not generally the $v_z$-nullcline. That is, along the TVM vertical acceleration is nonzero in general, even though  it is small $\left|\dot{v}_z\right| \ll 1$. The $v_z$-nullcline approximates the terminal velocity manifold, but is inexact.

\subsection{Bisection method}
Based on the observation that the TVM acts as a barrier to transport and its repelling nature in backward time, we introduce a bisection method to numerically identify the TVM. Bisection methods are typically used for identifying a zero-crossing of a function over a fixed interval. They are conceptually straightforward algorithms that have been extended for application to a variety of problems in dynamical systems to determine the boundaries between basins of stability \cite{battelino1988multiple,skufca2006edge}.

For a one dimensional function, one can find the zero-crossing of a function by beginning with endpoints on either side of the zero-crossing and evaluating the sign of the function at the midpoint of the two endpoints. If the function at the midpoint is negative, then the midpoint replaces the lower endpoint. If positive, the midpoint replaces the upper endpoint. The midpoint of the new endpoints is calculated and the process is repeated until the endpoints are within some tolerance of one another.

\begin{figure}[t]
	\centering
	\includegraphics[width=\textwidth]{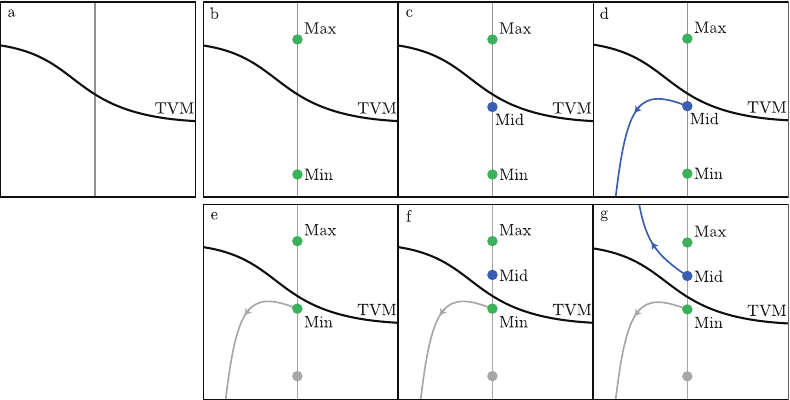}
	\caption{Schematic representation of a bisection algorithm to find the true TVM, shown in black, along a vertical slice of velocity space shown in gray (a). We select two points that bracket the manifold, labeled Max and Min and shown in green (b). Next, we calculate the midpoint of Max and Min, labeled Mid and shown in blue (c). We integrate Mid backward in time (d) and find that the trajectory moves downward. Therefore, Mid is selected as the new Min (e), a new Mid is selected (f) and integrated backward (g). This process is then repeated iteratively until the distance between Max and Min is smaller than a specified tolerance.}
	\label{fig:BisectAlgorithm}
\end{figure}

The implementation of a bisection method in our context is based on the origin of trajectories rather than the value of a function. Selecting a point in phase space and integrating backward in time, we check to see whether the trajectory heads toward positive values of vertical velocity or negative values of vertical velocity, which indicates whether the trajectory which crosses our test point began above or below the TVM. We select test points along a vertical slice of velocity space with a fixed initial horizontal velocity and do a bisection search for the corresponding $v_z$-value. The schematic of our bisection method based on this classification is shown in Figure \ref{fig:BisectAlgorithm}. An initial Max and Min are selected above and below the true TVM (Fig. \ref{fig:BisectAlgorithm}b). Then the midpoint, Mid, is selected (Fig. \ref{fig:BisectAlgorithm}c) and, by backwward integration, found to be below the TVM (Fig. \ref{fig:BisectAlgorithm}d). This point replaces Min (Fig. \ref{fig:BisectAlgorithm}e) and a new Mid is selected (Fig. \ref{fig:BisectAlgorithm}f).

\begin{figure}[t]
\centering
\includegraphics[width=6.5in]{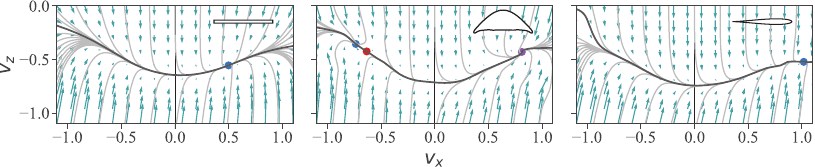}
\caption{The velocity space at a pitch angle of $\theta=-5^\circ$ for the flat plate (left), the flying snake cross section (center), and a NACA-0012 airfoil(right). The bisection method described in Figure \ref{fig:BisectAlgorithm} is able to accurately find the TVM (black), which attracts all trajectories (gray) of the vector field (blue).}
\label{fig:bisection}
\end{figure}

Figure \ref{fig:bisection} shows the results of our implementation of the bisection algorithm. A point along the terminal velocity manifold was found outside of the boundary of the figure through bisection and integrated forward in time. For the flying snake example in the center panel, the saddle fixed point was also calculated, and its unstable manifold integrated in forward time. Using the bisection method from the outside in conjunction with unstable manifold expansion from within the manifold provides a piecewise global approach to find the manifold. It is also possible, although more computationally expensive, to conduct a bisection search at a variety of points $v_x$ across the entire domain to find the corresponding point $v_z$ that lies along the manifold. These two approaches to the bisection algorithm give identical results.

We find that this method is very successful in identifying the TVM. Visual inspection of Figure \ref{fig:bisection} shows that all trajectories go to the calculated TVM in black. The success of this method confirms the observation that the TVM serves as the boundary between trajectories with large negative initial vertical velocities and trajectories with zero or positive initial vertical velocities.

\subsection{Trajectory-normal repulsion rate}
Another method, the \textit{trajectory-normal repulsion rate}, provides additional physical insight for the whole system  \cite{haller_variational_2011,nave2019trajectoryfree}. This quantity gives a measure of how much an invariant manifold normally repels nearby trajectories over a finite time, $T$. In an autonomous system, every trajectory is an invariant manifold, so the repulsion rate gives a scalar value at every point $x_0$ that indicates how much nearby trajectories are normally repelled. As illustrated in Figure \ref{fig:RepFactor}, this value is given by the forward mapping of trajectory-normal vectors,
\begin{equation}\label{eq:repfactor}
\rho_T = \langle \mathbf{n}_T, \nabla F_T\mathbf{n}_0\rangle,
\end{equation}
\begin{figure}[b!]
\centering
\includegraphics[width=0.6\textwidth]{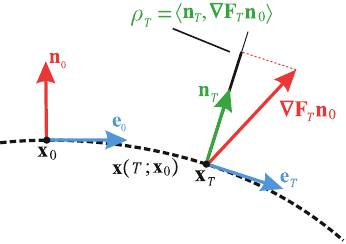}
\caption{Graphical explanation of the trajectory-normal repulsion rate, $\rho_T$. The initial unit normal vector $\mathbf{n}_0$ is mapped forward by the gradient of the flow map $\nabla \mathbf{F}_T\mathbf{n}_0$. By taking an inner product with the new unit normal vector $\mathbf{n}_T$, we measure the stretching of phase space normal to the trajectory of $\mathbf{x}_0$ over the time $T$. Reproduced from Nave et al. \cite{nave2019trajectoryfree}.}
\label{fig:RepFactor}
\end{figure}
where $F_T$ and $\nabla F_T$ represent the flow map of the system over the interval $(0,T)$ and its gradient, $\mathbf{n}_0$ is unit the normal vector at time $0$, and $\mathbf{n}_T$ is the unit normal vector at time $T$. The normal vectors are calculated by $90^\circ$ counterclockwise rotation,
\begin{equation}
\mathbf{R}=\left(\begin{array}{cc}
  0 & -1 \\ 1 & ~~0
        \end{array}\right),
\end{equation} 
of the tangent vector given by the acceleration $\dot{\mathbf{v}}$ in our problem, normalized by the magnitude of that acceleration $\left|\dot{\mathbf{v}}\right|$: $\mathbf{n}=\mathbf{R}\dot{\mathbf{v}}/\left|\dot{\mathbf{v}}\right|$.

In a Lagrangian method such as this one, globally attracting features in the system are found by detecting repelling features in backward time. In Figure \ref{fig:RepComp}, we show a comparison of the trajectory-normal repulsion rate over an integration time of $T=-0.32$, expressed in the rescaled time of (\ref{eq:dimensionlessEOM}). This integration time was the longest computational time for which no integration for an initial condition in the domain failed using the LSODA integration pack through SciPy \cite{jones2014scipy}. In backward time, the squared dependence on velocity causes each integration step to get increasingly large.

\begin{figure}[t!]
\centering
\includegraphics[width=\textwidth]{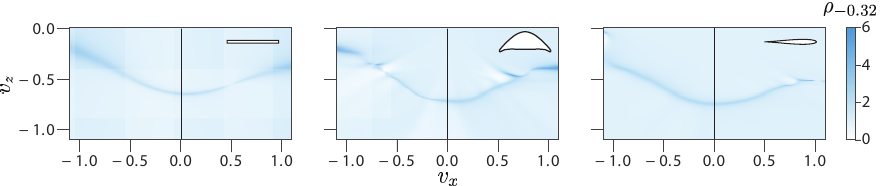}
\caption{The velocity space at a pitch angle of $\theta=-5^\circ$ for the flat plate(left), the flying snake cross section(center), and a NACA-0012 airfoil(right). The colormap shows the value of the trajectory-normal repulsion, described by (\ref{eq:repfactor}) and illustrated in Figure \ref{fig:RepFactor}, of the system when integrated backward. The black curve represents the results of the bisection method described in Figure \ref{fig:BisectAlgorithm}.}
\label{fig:RepComp}
\end{figure}

There are regions, particularly in the velocity space for the flying snake airfoil and the NACA-0012 airfoil, where portions of other trajectories may be more attracting than the nearby TVM over the integration time used. Therefore, we constraint the TVM to be the trajectory which maximizes the backward time trajectory-normal repulsion rate.

\section{Pitch angle dependence of the terminal velocity manifold}\label{s:3DTVM}
The bifurcation diagrams of Figure \ref{fig:Bifurcation} do not capture how the terminal velocity manifold itself changes with respect to the pitch angle. Therefore, we look at how the terminal velocity manifold changes with pitch angle and visually represent this change in extended phase space. We then prescribe pitch angle control and observe the classical behaviors of gliding and fluttering as occurring along the extended terminal velocity manifold.

\subsection{The terminal velocity manifold in extended phase space}

\begin{figure}[t]
	\centering
	%\begin{minipage}{3.1in}
	%\centering
	\includegraphics{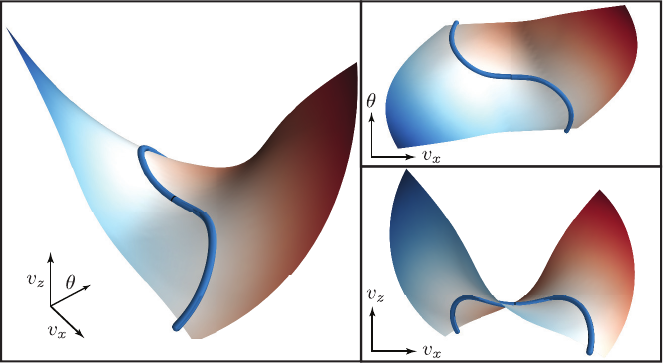}
	%\end{minipage}
	%\begin{minipage}{1.7in}
	%\centering
	%\includegraphics[trim={0.11in 0 0.06in 0},clip,height=1in]{Figures/3d-TVM-top.png} \\
	%\includegraphics[height=1in]{Figures/3d-TVM-front.png}
	%\end{minipage}
	\caption{The terminal velocity manifold as a two-dimensional surface embedded in 3-dimensional space for the flat plate for parameter values $\theta \in [-45^\circ, 45^\circ]$. Blue values indicate positive acceleration along the manifold and red values indicate negative acceleration along the manifold, while the one-dimensional curve shows the equilibrium points of the system, including stable nodes (blue) and the center equilibrium point (black, at the center of the manifold).} 
	\label{fig:3Dtvm-plate}
\end{figure}

To consider the effects of the pitch parameter $\theta$ on the TVM, we look to a 3-dimensional extended phase space, including pitch angle as an independent variable without motion. This view of the system allows us to visualize changes of the manifold in the parameter direction while still maintaining the same dynamics of the equations of motion (\ref{eq:inertial-v1}). We re-cast the system into extended phase space with the following equations,
\begin{equation}
\begin{aligned}
\dot{v}_x &= v^2(C_L\left(\gamma+\theta\right)\sin\gamma - C_D\left(\gamma+\theta\right)\cos\gamma), \\
\dot{v}_z &= v^2(C_L\left(\gamma+\theta\right)\cos\gamma + C_D\left(\gamma+\theta\right)\sin\gamma) - 1, \\
\dot{\theta} &= 0.
\end{aligned}
\label{eq:extended phase space}
\end{equation}
In this model, every fixed point of the two-dimensional system remains a fixed point because of the negligible dynamics in the $\theta$-direction. Therefore, the one-dimensional TVM will become two-dimensional as it is extended in the $\theta$-direction. As the accelerations $\dot{v}_x$ and $\dot{v}_z$ depend smoothly on $\theta$, we will be able to uncover a smooth extended TVM. To visualize this, we calculate the one-dimensional TVM from the two-dimensional model of (\ref{eq:inertial-v1}) using the bisection method over a variety of pitch angles and stitch these together to form a two-dimensional surface in the extended, 3-dimensional model. The resulting surfaces are shown in Figures \ref{fig:3Dtvm-plate}, \ref{fig:3Dtvm-snake}, and \ref{fig:3Dtvm-naca} over the interval $\theta \in [-45^\circ, 45^\circ]$.

\begin{figure}[t]
	\centering
	%\begin{minipage}{3.1in}
	%\centering
	\includegraphics{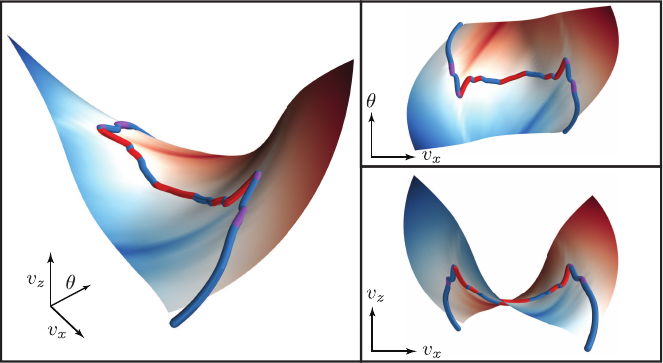}
	%\end{minipage}
	%\begin{minipage}{1.7in}
	%\centering
	%\includegraphics[trim={0.11in 0 0.06in 0},clip,height=1in]{Figures/3d-TVM-top.png} \\
	%\includegraphics[height=1in]{Figures/3d-TVM-front.png}
	%\end{minipage}
	\caption{The terminal velocity manifold as a two-dimensional surface embedded in 3-dimensional space for the flying snake airfoil over a pitch domain of $\theta \in [-45^\circ, 45^\circ]$. Blue values indicate positive acceleration along the manifold and red values indicate negative acceleration along the manifold, while the one-dimensional curve shows the equilibrium points of the system, including stable nodes (blue), saddle points (red), and stable foci (purple).}
	\label{fig:3Dtvm-snake}
\end{figure}

\begin{figure}[t]
	\centering
	%\begin{minipage}{3.1in}
	%\centering
	\includegraphics{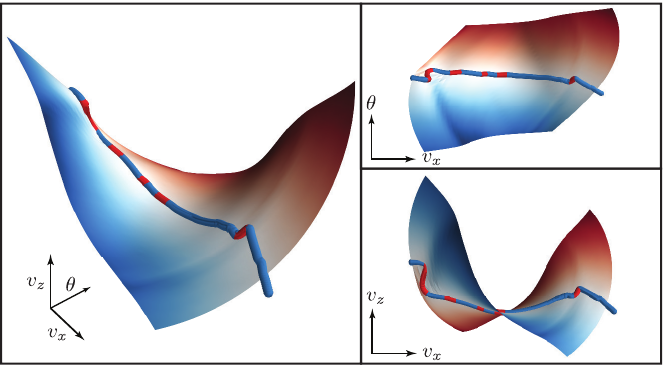}
	%\end{minipage}
	%\begin{minipage}{1.7in}
	%\centering
	%\includegraphics[trim={0.11in 0 0.06in 0},clip,height=1in]{Figures/3d-TVM-top.png} \\
	%\includegraphics[height=1in]{Figures/3d-TVM-front.png}
	%\end{minipage}
	\caption{The terminal velocity manifold as a two-dimensional surface embedded in 3-dimensional space for the NACA-0012 airfoil over a pitch domain of $\theta \in [-45^\circ, 45^\circ]$. Blue values indicate positive acceleration along the manifold and red values indicate negative acceleration along the manifold, while the one-dimensional curve shows the equilibrium points of the system, including stable nodes (blue) and saddle points (red). Equilibrium velocities with a larger horizontal velocity of $\left|v_x\right|>1.5$ were also omitted.}
	\label{fig:3Dtvm-naca}
\end{figure}

In each figure, the colormap on the surface shows the acceleration at every point on the manifold. Blue regions are associated with positive acceleration and red regions are associated with negative acceleration, where the positive direction is associated with the positive $v_x$ axis. As the TVM is an invariant manifold, the vector field is purely tangential to the manifold itself. The equilibrium points of the system are shown along the manifold as well, representing the bifurcation diagram of the system. The colors of Figure \ref{fig:Bifurcation} still hold: blue points are stable nodes, red points are saddle points, and purple are stable foci. For the flat plate considered in Figure \ref{fig:3Dtvm-plate}, there is a single stable node in all cases, except $\theta=0$ which contains a stable center manifold. The saddle-node bifurcations of the flying snake are evident in Figure \ref{fig:3Dtvm-snake}. The left-right symmetry of these two airfoils is also visible in the anti-symmetry of the TVM in the $v_x$ direction about $\theta=0$. The NACA-0012 airfoil has a broad range of equilibria over a very small region, showing its sensitivity to pitch angle. In Figure \ref{fig:3Dtvm-naca}, we only show equilibria with a horizontal velocity magnitude smaller than $1.5$, for comparison at the same scale as the other two TVM figures. As seen in \ref{fig:Bifurcation}, there is a wide range of fixed points for this system on the interval $\theta \in [-45^\circ, 45^\circ]$. Outside of this narrow range, all of those fixed points have a much larger magnitude of velocity.

\subsection{Conceptualizing motion with the terminal velocity manifold}\label{sec:time_varying_pitch}
With the two-dimensional TVM in extended phase space, we now look toward allowing variation in pitch with time. To consider the effects of the fluid moment on the body, it would become necessary to account for pitch rate in the dynamics of the system. Therefore, we will instead specify controlled pitch kinematics, i.e., $\theta(t) = u(t)$, where $u(t)$ is a prescribed control input function, and allow the system to evolve translationally (i.e., in $(v_x,v_y)$ space) through the two-dimensional equations of motion of (\ref{eq:inertial-v1}).  Thus, instead of \eqref{eq:extended phase space}, the {\it controlled extended phase space} equations of motion are,
\begin{equation}
\begin{aligned}
\dot{v}_x &= v^2(C_L\left(\gamma+\theta\right)\sin\gamma - C_D\left(\gamma+\theta\right)\cos\gamma), \\
\dot{v}_z &= v^2(C_L\left(\gamma+\theta\right)\cos\gamma + C_D\left(\gamma+\theta\right)\sin\gamma) - 1, \\
\theta(t) &= u(t).
\end{aligned}
\label{eq:extended-dynamics}
\end{equation}
Note that we are further assuming that $\dot{\theta}=\dot{u}$ is small enough to neglect additional forces which arise from pitch dynamics which are considered by, for instance, Andersen et al. \cite{andersen_analysis_2005,andersen_unsteady_2005}. If the pitch dynamics are slower than the motion onto the manifold, then all motion after an initial transient will occur close to this higher dimensional TVM. With this in mind, we consider the phenomena of gliding flight and fluttering. 

Gliding flight has served as an initial motivation for this model \cite{yeaton_global_2017}. Therefore, to look at how the motion of a gliding body occurs along the TVM, we consider pitch dynamics which slowly increase throughout the motion. This serves to represent the way that animal gliders begin with an initial downward descent and pitch up before landing \cite{socha2015animals,bahlman2013glide,bishop2006relationship}. Next, we look to fluttering descent as considered in a variety of studies on falling seeds, disks, and plates \cite{minami_various_2003,field_chaotic_1997,ern2012wake,andersen_unsteady_2005,andersen_analysis_2005,tam2010tumbling,vincent2016holes}. These pitch dynamics are given by slowly oscillating the pitch angle throughout the motion. In real examples, the oscillating pitch angle is the result of varying fluid moments on the body, so we choose a simple sinusoidal oscillation to represent the resulting kinematics. By considering these example motions, we can visualize the ways in which the translational forces considered in this model contribute to the full physics of a passively descending body.

\subsubsection{Gliding flight}
For animals exhibiting gliding flight, a typical glide includes: (1) an initial, ballistic acceleration; (2) a shallowing glide through the middle of the motion; and finally (3) a landing maneuver in which they slow descent \cite{socha2015animals,bahlman2013glide}. We represent this behavior here with a simulation of a gliding snake airfoil which increases its pitch angle through the glide, starting from an initially downward pitch angle \cite{socha2002kinematics,bishop2006relationship}. This gives a larger initial pitch angle to maximize ballistic acceleration, a shallowing pitch angle through the glide as the animal passes through its maximum lift-to-drag ratio, and finally a pitch up to decelerate overall for landing. A function $u(t)$ which yeilds a linear increase in pitch is the simplest way to represent this phenomenon. The results of this linearly increasing pitch angle can be seen in Figure \ref{fig:gliding}. The hallmarks of the behavior can be found in the velocity space, which shows the projection of motion in the $v_x$-$v_z$ plane. Initially, with a negative pitch angle, motion onto the TVM is rapid, and acceleration occurs quickly. Next, the horizontal velocity increases as the glide moves forward. Finally, as the glider's pitch angle levels out, both vertical and horizontal velocity decrease for a safer landing.

\begin{figure}[t]
\centering
\includegraphics[trim={0in 0in 0.6in 0.3in},clip]{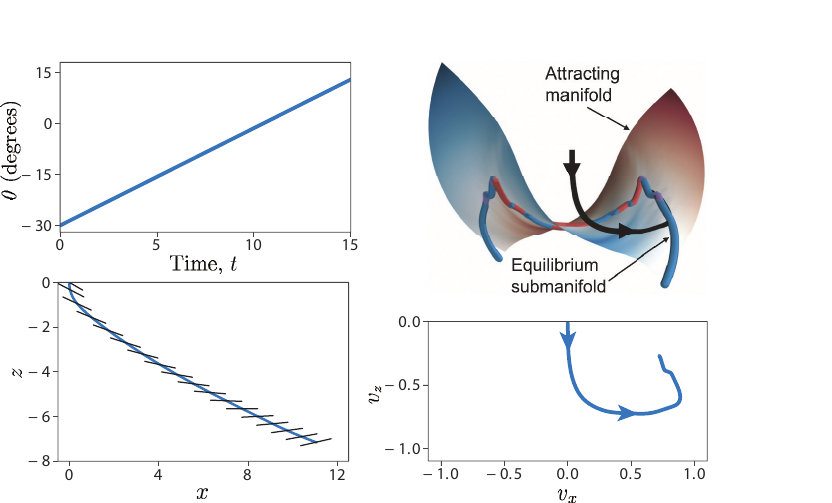}
\caption{In this figure, we show an example glide of a snake-shaped glider with a linearly increasing pitch angle $\theta$ with respect to time, as shown in the top-left corner. The resulting motion in physical space is shown in the bottom-left, with the pitch angle marked by the intersecting lines. This motion is shown in extended phase space with respect to the extended TVM, the attracting manifold, in the top-right, with the black line representing the trajectory. The bottom-right panel shows motion in velocity space. The glider is quickly drawn toward fast downward motion, but as it pitches up, the trajectory moves along the terminal velocity manifold toward a stable forward glide before slowing descent just before landing.}
\label{fig:gliding}
\end{figure}

\subsubsection{Fluttering plates}
The fluttering of a thin body through a fluid has been studied extensively in a variety of studies \cite{field_chaotic_1997,ern2012wake,andersen_unsteady_2005,andersen_analysis_2005,tam2010tumbling,vincent2016holes}. This is a frequently-observed behavior of passively descending plates characterized by coupled oscillations of pitch angle and horizontal motion as the plate descends vertically. We can emulate this behavior with our extended three-dimensional model by prescribing oscillating pitch kinematics which are faster than the motion along the TVM but slower than the time scale of motion onto the manifold.

\begin{figure}[t]
\centering
\includegraphics[trim={0 0 0.4in 0.2in},clip]{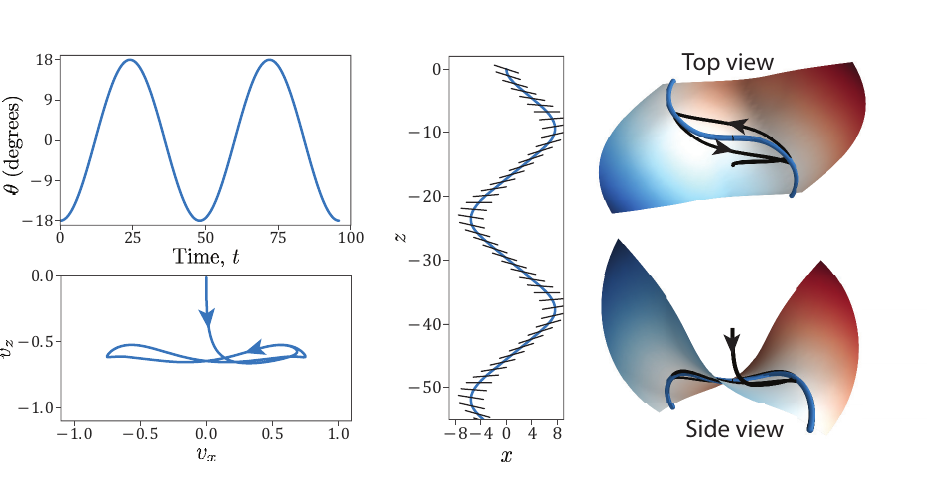}
\caption{In this figure, we show the example of fluttering descent of a flat plate. The controlled pitch angle, $\theta$, varies sinusoidally, as shown in the top-left. The resulting motion in physical space is shown in the central panel, with snapshots of the flat plate shown in black. The motion in extended phase space is shown in the two panels on the right side relative to the terminal velocity manifold. The motion projected into velocity space is shown in the bottom-left corner.}
\label{fig:fluttering}
\end{figure}

With oscillating pitch control, we are able to replicate dynamics which closely resemble classical fluttering. The trajectory forms a limit cycle oscillation which lies  close to the extended TVM, but not exactly along it, as shown in Figure \ref{fig:fluttering}.  A spectral submanifold approach \cite{haller2016nonlinear} might be used to reveal the actual periodically varying two-dimensional surface on which the limit cycle lives, but such a calculation goes beyond the scope of this paper, and we consider the extended TVM an adequate approximation.  We note that the magnitude of the controlled oscillations determine the size of the limit cycle (i.e., larger amplitude for $\theta(t)=u(t)$ leads to larger amplitudes in horizontal excursions).  

Note that this limit cycle in the TVM is driven by periodic motion of the pitch acting as a periodic forcing. It is a different mechanism compared with the limit cycle due to the Hopf bifurcation, a nonlinear phenomenon inherent in the $(v_x,v_y)$ dynamics themselves, discussed in Appendix \ref{hopf}, and dependent on the lift and drag curves. We note that none of the example airfoils used in this study undergo Hopf bifurctaions in pitch, but it may be possible for other airfoils \cite{yeaton_global_2017}, whose lift and drag curves meet the criteria described  in Appendix \ref{hopf}.

\subsubsection{Implications for controlled gliding}

As previous sections indicate, the TVM is a globally attracting invariant manifold that acts as the dominant organizing feature of the extended phase space.  
Within the TVM, there is also an equilibrium submanifold with equilibria of alternating stability type, with different stability basins. By using pitch $\theta$ as a control parameter, one could trigger a transition from one stable equilibrium to another, to achieve controlled trajectories  which achieve certain objectives, such as energy efficiency, maximum travel distance, etc.  Biological systems may already use this pitch dependence, for example, to slow a glide before landing and controlling contact with a substrate (e.g., Figure \ref{fig:gliding}).
As more detailed kinematics data become available from animal studies, this framework can be used to understand how experimentally recorded gliders alter their trajectory through control of body orientation.

The TVM framework also suggests that the glide dynamics of engineered aerial and aquatic autonomous gliders could be designed to exploit the structure of the TVM.
Airfoils or hydrofoils could be designed via an inverse approach of starting with a desired TVM and then designing the foil such that the angle-of-attack dependence of the lift and drag coefficients leads to the desired TVM.  For instance, the desired TVM could be chosen such that an autonomous controlled glider would  only need small actuations in pitch angle to passively switch to different glide states to achieve desired trajectories, or other desired functionality for the glide (e.g., ease of landing, maneuverability).  The framework of \cite{balasuriya2017unsteadily} of could be adapted to design lift and drag functions that achieve a desired TVM.

\section{Summary and Conclusion}\label{s:conclusion}
In the present work, we have taken the observation of a terminal velocity manifold (TVM) from Yeaton et al.\ \cite{yeaton_global_2017} and placed it on a more mathematically rigorous footing. Through  various methods of computing this curve, we have gained insights into its properties. First, it is the union of all weak stable submanifolds of stable equilibrium points. Because of this structure, computational techniques are required. We have employed a bisection method to identify the TVM via dichotomy by integrating trajectories in backward time to find their origin. From this method, we have seen that the terminal velocity manifold divides velocity space into trajectories with initial vertical accelerations aligned with the direction of gravity from those which initially accelerate opposite the direction of gravity. We also calculated the trajectory-normal repulsion rate \cite{haller_variational_2011,nave2019trajectoryfree} in backward time to show that the TVM is the most normally attracting curve in velocity space in forward time. Finally, we show the terminal velocity manifold in parameter-extended phase space to provide visual intuition for the mechanics of gliding flight and passive descent.

The glider model considered in this paper presents a naturally nonlinear model with interesting mathematical properties. The TVM represents an epsilon-free slow-fast system in which there is a separation of time scales without an explicit slow parameter, and the methods discussed in this paper to discover the TVM may have implications for the analysis of other slow-fast systems. This model also presents the challenge of analyzing functional parameters in a system. Through our proof of fixed point conditions, we show one way in which these kinds of functional parameters may be analyzed, deducing information about the system based on the constraints on the space of possible functions. In this case, the physical assumption of finite lift-to-drag ratio gave insight into the possible equilibria of the system. This work provides a new physical intuition into the behavior of gliding bodies, and demonstrates a variety of methods for the computation of influential geometric structures in mathematical models.

\section*{Acknowledgments}
This work was supported by National Science Foundations grants 1537349 and 1821145 and by the Biological Transport (BioTrans) Interdisciplinary Graduate Education Program at Virginia Tech. We would also like to thank Isaac Yeaton and Jake Socha for many fruitful conversations about this work.

\section*{Appendix}

\begin{appendices}

\section{Glider Model Stability Analysis, Limit Cycles, and Terminal Velocity Manifold Analytical Approximations}\label{appendix}

Below we list the %analytical calculation of the terminal velocity manifold and 
details about the Hopf bifurcation possible in our model.
The equilibrium condition, from \eqref{eq:dimensionlessEOM} and \eqref{eq:glider model}, implies,
\begin{align*}
    % \bar{v}^{*2} =& \dfrac{1}{\varepsilon C_D(\alpha^*) \sin{\gamma^*}
    %     \left[ \left(\dfrac{C_L(\alpha^*)}{C_D(\alpha^*)} \right)^2 + 1 \right]} \\
    \bar{v}^* =& \dfrac{1}{
         \left( C_L(\alpha^*)^2 + C_D(\alpha^*)^2 \right) ^{1/4} } , \\
    \gamma^* =& \cot^{-1} \left( \dfrac{C_L(\alpha^*)}{C_D(\alpha^*)} \right), \\
    \bar{v}^*_x =& ~~~\bar{v}^* \cos{\gamma^*}, \\
    \bar{v}^*_z =& -\bar{v}^* \sin{\gamma^*}, \\
    \alpha^* =& \theta + \gamma^*. %+ \varphi
\end{align*}

%%%%%%%%%%%%%%%

\subsection{Expansion about the equilibrium}\label{expansion}

To obtain an analytical approximation of the eigenvalues and eigenvectors, and to put the system in a form where we can analytically obtain the glide manifold in the snake phase space, we first do a change of coordinates centered on an equilibrium point. 
We will work in polar coordinates, since the equations of motion look simpler,
\begin{equation}\label{transformation_v_gamma_to_psi_r}
    \psi = \gamma - \gamma^*, \qquad
    r = \bar v - \bar v^*
\end{equation}
where we are working in non-dimensional and rescaled variables. At equilibrium, from \eqref{eq:dimensionlessEOM}, we have,
\begin{equation}\label{Omega_zero_eqs_polar_equil}
\begin{split}
\hat v^{\prime} = 0 & \Rightarrow   \bar v^{*2} C_D(\alpha) = \sin{\gamma^*} \\
\gamma^{\prime} = 0 & \Rightarrow   \bar v^{*2} C_L(\alpha)  =  \cos{\gamma^*}.
\end{split}
\end{equation}
where $(\cdot)'$ will be used throughout this appendix to denote the derivative of a function with respect to its argument.
%we use $(~^{\prime})$ to denote differentiation with respect to the non-dimensional time.
In the shifted coordinates, the equilibrium is the origin and the equations of motion are
\begin{equation}
\begin{split}
\psi^{\prime} & = 
-  (\bar v^* + r)  C_L(\gamma^* + \theta^* + \varphi + \psi) + \dfrac{1}{(\bar v^* + r)} \cos(\gamma^* + \psi),
\\
r^{\prime} & = 
- (\bar v^* + r)^2 C_D(\gamma^* + \theta^* + \varphi+ \psi) + \sin(\gamma^* + \psi),
\end{split}
\end{equation}
We want to write the right-hand-side of the equations of motion as a power series expansion in $\psi$ and $r$. To start out, we will get this expansion to second-order.

Let's first look at the $\psi^{\prime}$ expression.
Note that, via Taylor expansion,
\begin{equation}
\dfrac{1}{(\bar v^* + r)} = \dfrac{1}{\bar v^*\left( 1 + \frac{r}{\bar v^*} \right) } = \dfrac{1}{\bar v^*} \left( 1 - \frac{r}{\bar v^*} + \left( \frac{r}{\bar v^*} \right)^2 
-\left( \frac{r}{\bar v^*} \right)^3
+
\mathcal{O}\left( \frac{r}{\bar v^*} \right)^4\right).
\end{equation}
Using the $\cos$ addition formula,
\begin{equation}
\cos(\gamma^* + \psi) = \cos \psi \cos \gamma^* - \sin \psi \sin \gamma^*,
\end{equation}
along with (\ref{Omega_zero_eqs_polar_equil}), we get,
\begin{equation}
\begin{split}
\frac{1}{\bar v^*} \cos(\gamma^* + \psi) & = 
\frac{1}{\bar v^*}
\left[ 
 \bar v^{*2} C_L(\alpha^*) \cos \psi - 
 \bar v^{*2} C_D(\alpha^*) \sin \psi 
\right], \\
& = 
 \bar v^{*} 
\left[ C_L(\alpha^*) \cos \psi - C_D(\alpha^*) \sin \psi 
\right],
\end{split}
\end{equation}
% where, for brevity, it should be understood that $C_L$ denotes $C_L(\alpha^*)$, etc.
so,
\begin{equation}
\begin{split}
\dfrac{1}{(\bar v^* + r)} \cos(\gamma^* + \psi)
&= \bar v^{*} 
\left[ C_L(\alpha^*) \cos \psi - C_D(\alpha^*) \sin \psi \right] 
\left( 1 - \frac{r}{\bar v^*} + \left( \frac{r}{\bar v^*} \right)^2 
-\left( \frac{r}{\bar v^*} \right)^3
+\mathcal{O}\left( \frac{r}{\bar v^*} \right)^4\right),
\\
& =
\bar v^{*} 
\left[ C_L(\alpha^*) \cos \psi - C_D(\alpha^*) \sin \psi \right]
-
r 
\left[ C_L(\alpha^*) \cos \psi - C_D(\alpha^*) \sin \psi \right]
\\
&+
\left( \frac{r^2}{ \bar v^{*}} - \frac{r^3}{ \bar v^{*2}}  \right) 
\left[ C_L(\alpha^*) \cos \psi - C_D(\alpha^*) \sin \psi \right]
+
\mathcal{O}\left( r^4 \right).
\end{split}
\end{equation}
Also note that $C_L(\gamma^* + \theta^* + \varphi + \psi)
=C_L(\alpha^* + \psi)$, and by Taylor series expansion we have,
\begin{equation}
C_L(\alpha^* + \psi) = C_L(\alpha^*) + \psi C_L^{\prime}(\alpha^*)
+\tfrac{1}{2}\psi^2 C_L^{\prime \prime}(\alpha^*) + 
\mathcal{O} ( \psi^3 ),
\end{equation}
and similarly for the drag term,
\begin{equation}
C_D(\alpha^* + \psi) = C_D(\alpha^*) + \psi C_D^{\prime}(\alpha^*)
+\tfrac{1}{2}\psi^2 C_D^{\prime \prime}(\alpha^*) 
+\tfrac{1}{6}\psi^3 C_D^{\prime \prime \prime}(\alpha^*)
+ 
\mathcal{O} ( \psi^4 ),
\end{equation}
so,
\begin{equation}
\begin{split}
-  (\bar v^* + r)  C_L(\gamma^* + \theta^* + \varphi + \psi)
&=
-  \bar v^*
\left[C_L(\alpha^*) + \psi C_L^{\prime}(\alpha^*)
+\tfrac{1}{2}\psi^2 C_L^{\prime \prime}(\alpha^*) 
+\tfrac{1}{6}\psi^3 C_D^{\prime \prime \prime}(\alpha^*)
+\mathcal{O} ( \psi^4 ) \right],
\\
&
-  r
\left[C_L(\alpha^*) + \psi C_L^{\prime}(\alpha^*)
+\tfrac{1}{2}\psi^2 C_L^{\prime \prime}(\alpha^*) + 
\mathcal{O} ( \psi^3 ) \right].
\end{split}
\end{equation}
So the $\psi^{\prime}$ expression becomes,
\begin{equation}
\begin{split}
\psi^{\prime} & = 
\bar v^* 
\left( -C_L(\alpha^*+\psi) + C_L(\alpha^*)\cos \psi
- C_D(\alpha^*) \sin \psi \right), \\
&
+  r \left( -C_L(\alpha^*+\psi) - C_L(\alpha^*)\cos \psi
+ C_D(\alpha^*) \sin \psi \right),  \\
&
+\left( \frac{r^2}{ \bar v^{*}} - \frac{r^3}{ \bar v^{*2}}  \right) 
\left[ C_L(\alpha^*) \cos \psi - C_D(\alpha^*) \sin \psi \right]
+
\mathcal{O}\left( r^4 \right).
\end{split}
\end{equation}
Note the Taylor series up to 3rd order in $\psi$ for $\cos$ and $\sin$ is,
\begin{equation*}
    \cos \psi =    1 - \tfrac{1}{2}\psi^2 + \mathcal{O}(\psi^4), \qquad
    \sin \psi = \psi - \tfrac{1}{6}\psi^3 + \mathcal{O}(\psi^5).
\end{equation*}
Plugging in all the Taylor series expansions, we get, up through 3rd order in  $\psi$ and $r$,
\begin{equation}
\begin{split}
\psi^{\prime} & = 
 \bar v^* 
\left( -C_L - \psi C_L^{\prime} 
- \tfrac{1}{2}\psi^2 C_L^{\prime \prime}
- \tfrac{1}{6}\psi^3 C_D^{\prime \prime \prime}
+ C_L - \tfrac{1}{2}\psi^2 C_L
- \psi C_D  
+ \tfrac{1}{6}\psi^3 C_D \right), \\
&
+  r \left( -C_L - \psi C_L^{\prime}
- \tfrac{1}{2}\psi^2 C_L^{\prime \prime}
- C_L + \tfrac{1}{2}\psi^2 C_L
+ C_D\psi \right), \\
&
+\left( \frac{r^2}{ \bar v^{*}} - \frac{r^3}{ \bar v^{*2}}  \right) 
\left[ C_L - C_D \psi \right]
+
\mathcal{O}\left( 4 \right),
%+ \mathcal{O}(r^2,\psi^2) 
\end{split}
\end{equation}
where it should be understood that the lift and drag coefficients and all their derivatives (w.r.t.\ angle of attack) are evaluated at the critical point $\alpha^*$, 
and where $\mathcal{O}(4)$ stands for terms which are fourth order or higher in the variables $\psi$ and $r$.

Grouping terms by powers in $\psi$ and $r$, we get
\begin{equation}
\begin{split}
\psi^{\prime} & = 
 \bar v^* \left[- C_L^{\prime} - C_D \right] \psi  
+  2 \left[ - C_L \right] r \\
&
+ \tfrac{\bar v^*}{2}  \left[-  C_L^{\prime \prime}
-  C_L \right]  \psi^2
+ \left[ - C_L^{\prime} +  C_D \right] \psi r 
+ \tfrac{ 1}{\bar v^{*}}[C_L] r^2,
 \\
&
+\tfrac{\bar v^*}{6}[C_D - C_L^{\prime \prime \prime}] \psi^3
+\tfrac{1}{2}[C_L - C_L^{\prime \prime}]\psi^2 r
+\tfrac{ 1}{\bar v^{*}}[-C_D] \psi r^2
+\tfrac{ 1}{\bar v^{*2}}[-C_L] r^3
+ \mathcal{O}(4).
\end{split}
\end{equation}
There are terms linear in $\psi$ and $r$,  terms second-order in $\psi$ and $r$, and 
terms third-order in $\psi$ and $r$.
%where $\mathcal{O}(3)$ stands for terms which are third order or higher in the variables $\psi$ and $r$.

We can follow a similar procedure for the $r^{\prime}$ expression.
Using the $\sin$ addition formula,
\begin{equation}
\sin(\gamma^* + \psi) = \sin \psi \cos \gamma^* + \cos \psi \sin \gamma^*,
\end{equation}
along with (\ref{Omega_zero_eqs_polar_equil}), we get
\begin{equation}
\begin{split}
 \sin(\gamma^* + \psi) & = 
\left[ 
 \bar v^{*2} C_L \sin \psi + 
 \bar v^{*2} C_D \cos \psi 
\right], \\
& = 
 \bar v^{*2} 
\left[ C_L \sin \psi + C_D \cos \psi 
\right],
\\
& = 
 \bar v^{*2} 
\left[ C_L  \psi - \tfrac{1}{6} C_L \psi^3 + C_D - \tfrac{1}{2} \psi^2 C_D 
 + \mathcal{O}(4).
\right]
\end{split}
\end{equation}
Also,
\begin{equation}
\begin{split}
-  (\bar v^* + r)^2 C_D(\alpha^*+ \psi) &=
-  \bar v^{*2} 
\left[  C_D + \psi C_D^{\prime}
+\tfrac{1}{2}\psi^2 C_D^{\prime \prime} 
+\tfrac{1}{6}\psi^3 C_D^{\prime \prime \prime} +
\mathcal{O} ( 4 ) \right], \\
&
- 2 \bar v^* r
\left[  C_D + \psi C_D^{\prime}
+\tfrac{1}{2}\psi^2 C_D^{\prime \prime} + 
\mathcal{O} ( 3 ) \right], \\
&
-  r^2 
\left[  C_D + \psi C_D^{\prime}
+
\mathcal{O} ( 2 ) \right],
\end{split}
\end{equation}
so
we get
\begin{equation}
\begin{split}
 r^{\prime} & = 
 \bar v^{*2} \left[- C_D^{\prime} + C_L \right] \psi   
+  2 \left[ - \bar v^* C_D \right] r , \\
&
+\tfrac{\bar v^{*2}}{2}  \left[- C_D - C_D^{\prime \prime} \right] \psi^2
+ 2 \bar v^*[-C_D^{\prime}]  \psi r 
+  [-C_D] r^2 , \\
& 
+\tfrac{\bar v^{*2}}{6}[-C_L - C_D^{\prime \prime \prime}] \psi^3
+       \bar v^* [ - C_D^{\prime \prime}]\psi^2 r
+\tfrac{ 1}{2}[-C_D^{\prime}] \psi r^2
+ [ 0] r^3
+ \mathcal{O}(4).
\end{split}
\end{equation}

Putting the $(\psi,r)$ system into matrix form, we have,
\begin{equation}
    \label{eqn:jacobian_polar_local}
	\begin{bmatrix} 
	\psi^{\prime} \\ 
	r^{\prime} 
	\end{bmatrix} =
\underbrace{	\begin{bmatrix}
	\bar v^* \left[- C_L^{\prime}- C_D \right] & 
	\left[ -2 C_L \right] \\ %[10pt]
	\bar v^{*2} \left[- C_D^{\prime} + C_L \right]&     \left[ -2 \bar v^* C_D \right]
	\end{bmatrix}}_{\mathbf{A}}
		\begin{bmatrix} 
		\psi \\ 
		r 
		\end{bmatrix}
		+ 
		\mathbf{F}(\psi,r) 
		+ \mathcal{O}(4),
\end{equation}
where $\mathbf{F}(\psi,r)$ stands for second and third-order terms, and is given by
\begin{equation}
    \label{eqn:jacobian_polar_local_F}
\mathbf{F}(\psi,r) = 
		\begin{bmatrix} 
		F^1(\psi,r) \\ 
		F^2(\psi,r) 
		\end{bmatrix},
\end{equation}
where
\begin{equation}
 	\label{eqn:jacobian_polar_local_F1}
\begin{split}
F^1 (\psi,r) =& 
 \tfrac{\bar v^*}{2}  \left[
 -  C_L  -  C_L^{\prime \prime} 
 \right]  \psi^2
+ \left[  C_D - C_L^{\prime}  \right] \psi r 
+ \tfrac{ 1}{\bar v^{*}} [C_L] r^2
 \\
+&
\tfrac{\bar v^*}{6}
[ C_D - C_L^{\prime \prime \prime} ] \psi^3
+\tfrac{1}{2} [ C_L - C_L^{\prime \prime} ]\psi^2 r
+\tfrac{ 1}{\bar v^{*}}[ -C_D ] \psi r^2
+\tfrac{ 1}{\bar v^{*2}}[ -C_L ] r^3,
\end{split}
\end{equation}
and,
\begin{equation}
    \label{eqn:jacobian_polar_local_F2}
\begin{split}
F^2(\psi,r) =& 
\tfrac{\bar v^{*2}}{2}  \left[- C_D - C_D^{\prime \prime} \right] \psi^2
+ 2 \bar v^*[-C_D^{\prime}]  \psi r 
+  [-C_D] r^2 \\
+&\tfrac{\bar v^{*2}}{6}[-C_L - C_D^{\prime \prime \prime}] \psi^3
+       \bar v^* [ - C_D^{\prime \prime}]\psi^2 r
+\tfrac{ 1}{2}[-C_D^{\prime}] \psi r^2
+ [ 0] r^3.
\end{split}
\end{equation}
% We have put the factor $\tfrac{1}{\varepsilon}$ on the left-hand-side as it is an overall factor that can be thought of as re-scaling time. It will not affect the {\it geometry} of any computed manifolds (although it will affect the time-scale of motion along manifolds).

From the $2 \times 2$ (Jacobian) linearization matrix $\mathbf{A}$ in (\ref{eqn:jacobian_polar_local}), 
\begin{equation}\label{linearization_matrix_A}
\mathbf{A}=
\begin{bmatrix}
	\bar v^* \left[- C_L^{\prime}- C_D \right] & 
	\left[ -2 C_L \right] \\ %[10pt]
	\bar v^{*2} \left[- C_D^{\prime} + C_L \right]&     \left[ -2 \bar v^* C_D \right]
\end{bmatrix},
\end{equation}
we can analytically determine the eigenvalues and eigenvectors in terms of the equilibrium point and characteristics of the lift and drag curves at that point.

For this simple system, we can write the eigenvalue equation in the standard form as
\cite[p.~130]{Strogatz2001}
\begin{equation*}
    \lambda^2 - \bar \tau \lambda + \bar \Delta = 0,
\end{equation*}
where $\bar \tau = \mathrm{trace}(\mathbf{A})$ and $\bar \Delta = \mathrm{det}(\mathbf{A})$. The eigenvalues are
\begin{equation*}
    \lambda_1 = \dfrac{\bar \tau + \sqrt{\bar \tau^2 - 4 \bar \Delta}}{2}, \qquad
    \lambda_2 = \dfrac{\bar \tau - \sqrt{\bar \tau^2 - 4 \bar \Delta}}{2}.
\end{equation*}

The trace of  $\mathbf{A}$ is
\[
\bar \tau = \bar v^* \left[- C_L^{\prime}- 3 C_D \right],
\]
and the determinant of $\mathbf{A}$ is,
\[
\bar \Delta = 2 \bar v^{*2} \left[ C_L^2  + C_D^2 + C_L^{\prime}C_D - C_D^{\prime}C_L \right].  
\]
So,
\begin{equation}
\begin{split}
\bar \tau^2 - 4 \bar \Delta 
& = \bar v^{*2}\left[ (C_L^{\prime} + 3 C_D)^2
-8 (  C_L^2  + C_D^2 + C_L^{\prime}C_D - C_D^{\prime}C_L  ) \right] ,\\
& = \bar v^{*2}\left[ C_L^{\prime 2} + C_D^2 - 2 C_L^{\prime}C_D 
-8 C_L^2 + 8 C_D^{\prime}C_L \right ] ,\\
& = \bar v^{*2}\left[  (C_D - C_L^{\prime})^2 - 8 C_L (C_L - C_D^{\prime}) \right],
\end{split}
\end{equation}
and the eigenvalues are,
\begin{equation*}
    \lambda_{1,2} = \dfrac{\bar \tau \pm \sqrt{\bar \tau^2 - 4 \bar \Delta}}{2}.
\end{equation*}

We can write the eigenvalues more compactly by introducing $\tau$ and $\Delta$,
\begin{equation}
    \begin{split}         
    \tau =&  \left(\tfrac{ C_L^{\prime} }{C_D}\right) + 3,\\
     \Delta =&   \left(\tfrac{C_L}{C_D}\right)^{\prime} + \left(\tfrac{C_L}{C_D}\right)^{2} + 1.
    \end{split}
    \label{tau_and_delta}
\end{equation}
such that,
\begin{equation}
    \begin{split}         
    \bar \tau =& - \frac{ C_D }{( C_L^2 + C_D^2 )^{1/4}}  \tau,
    \\
     \bar  \Delta =& 2 \frac{ C_D^2 }{( C_L^2 + C_D^2 )^{1/2}}  \Delta,
    \end{split}
\end{equation}
in which case,
\begin{equation}
\lambda_{1,2} = \frac{ C_D }{2( C_L^2 + C_D^2 )^{1/4}}
		\left( -\tau \pm \sqrt{\tau^2 - 8 \Delta} \right),
\end{equation}
and since the prefactor,
\begin{equation}
\frac{ C_D }{2( C_L^2 + C_D^2 )^{1/4}},
\end{equation}
is always a positive scalar, the location of the eigenvalues on the complex plane is given solely by $\tau$ and $\Delta$.

\subsection{Hopf bifurcation case}\label{hopf}

We often view the pitch variable $\theta$ as a bifurcation parameter. A Hopf bifurcation occurs when 
$\bar \tau=0$ and $\bar \Delta > 0$, so the eigenvalues are purely imaginary,
\begin{equation*}
    \lambda_{\pm} = \pm i \omega,
\end{equation*}
where,
$\omega = \sqrt{\bar \Delta}>0$.
Suppose this occurs along the branch of equilibria at a particular value of $\theta$ which we will call $\bar \theta$.
By the assumption of $\bar \tau=0$, we conclude that,
\begin{equation}
C_L^{\prime} = -3 C_D,
\end{equation}
and from $\bar \Delta > 0$, we conclude that
\begin{equation}
\begin{split}
C_L > \tfrac{1}{2} \left( C_D^{\prime} + \sqrt{C_D^{\prime 2} + 8 C_D^{2}} \right)
\quad \rm{or} \quad 
C_L < \tfrac{1}{2} \left( C_D^{\prime} - \sqrt{C_D^{\prime 2} + 8 C_D^{2}} \right).
\end{split}
\end{equation}
Notice that the sign of
\begin{equation}
d=\frac{d}{d \theta}\left(\rm{Re}(\lambda(\theta))\right) \bigg|_{\theta=\bar \theta} 
= \tfrac{1}{2}\bar \tau^{\prime} = \tfrac{1}{2}\bar{v}^*( -C_L^{\prime\prime}  -3 C_D^{\prime}),
\end{equation}
is an indication of the type of bifurcation.
If, as $\theta$ increases, the equilibrium point is going from a stable to unstable focus, then $\bar \tau^{\prime}>0$. Otherwise, $\bar \tau^{\prime}<0$. 
Note that,
\begin{equation}
\begin{split}
C_L^{\prime\prime} < -3 C_D^{\prime} &\quad \text{going from stable to unstable}, \bar \tau^{\prime}>0\\
C_L^{\prime\prime} > -3 C_D^{\prime} &\quad \text{going from unstable to stable}, \bar \tau^{\prime}<0 
\end{split}
\end{equation}

For the case of purely imaginary eigenvalues, we have,
\begin{equation}
\mathbf{A}=
\begin{bmatrix}
	\bar v^* 2 C_D  &   -2 C_L  \\ %[10pt]
	\bar v^{*2} \left(- C_D^{\prime} + C_L \right)&     -\bar v^* 2 C_D 
	\end{bmatrix},
\end{equation}
where the eigenvalues are $\pm i \omega$,
where,
\begin{equation}
\label{Hopf_omega}
\omega = \bar v^{*}  \sqrt{2} \sqrt{C_L^2 - C_L C_D^{\prime} - 2 C_D^2},
\end{equation}
is positive.
We solve for the generalized eigenvectors $\mathbf{u}$ and $\mathbf{v}$,
\begin{equation}
\mathbf{u} = \begin{bmatrix} 2 C_L \\ \bar v^* 2 C_D \end{bmatrix},
\quad 
\mathbf{v} = \begin{bmatrix} 0 \\ \omega \end{bmatrix},
\end{equation}
Define the matrix $\mathbf{P}$ as,
\[
\mathbf{P} = \left[ \mathbf{u} ~~ \mathbf{v} \right],
\]
so 
$\mathbf{u}$ is the first column of $\mathbf{P}$ and
$\mathbf{v}$ is the second column of $\mathbf{P}$.
This matrix defines a linear transformation to the eigenbasis $(x,y)$ via,
\[
		\begin{bmatrix} 
		\psi \\ 
		r 
		\end{bmatrix}
		= 
\mathbf{P}
		\begin{bmatrix} 
		x \\ 
		y 
		\end{bmatrix},
\]
so 
the $x$ coordinate is along the $\mathbf{u}$ direction and
the $y$ coordinate is along the $\mathbf{v}$ direction. Note that
\begin{equation}\label{change_to_xy}
\begin{split}
\psi &= 2 C_L x, \\
r    &= \bar v^* 2 C_D x + \omega y.
\end{split}
\end{equation}
The dynamics in the eigenbasis are,
\begin{equation}
    \label{eqn:eigenbasis_hopf}
	\begin{bmatrix} 
	 x^{\prime} \\ 
	 y^{\prime}
	\end{bmatrix} =
\begin{bmatrix}
	0       & -\omega \\ %[10pt]
	\omega  &  0  
	\end{bmatrix}
		\begin{bmatrix} 
		x \\ 
		y
		\end{bmatrix}
		+ 
		\mathbf{P}^{-1} 
		\mathbf{F}(2 C_L x, \bar v^* 2 C_D x + \omega y) 
        %  \mathbf{F}\left(\mathbf{P}
% 		\begin{bmatrix} 
% 		x \\ 
% 		y 
% 		\end{bmatrix} \right) 
		+ \mathcal{O}(4),
\end{equation}
where $\mathbf{F}$, from \eqref{eqn:jacobian_polar_local_F}, includes the 2nd and 3rd order terms and where,
\begin{equation}
\mathbf{P}^{-1}=
\begin{bmatrix}
	~~\tfrac{1}{2 C_L}  &   0  \\
	-\tfrac{\bar v^{*} C_D}{\omega C_L}  &   \tfrac{1}{\omega} 
	\end{bmatrix}.
\end{equation}
We will re-write the nonlinear terms, defining $\mathbf{f}(x,y)=\mathbf{P}^{-1} \mathbf{F}(2 C_L x, \bar v^* 2 C_D x + \omega y)$, so the resulting equation now has the form,
\begin{equation}
    \label{eqn:eigenbasis_hopf2}
	\begin{bmatrix} 
	 x^{\prime} \\ 
	 y^{\prime}
	\end{bmatrix}  =
\begin{bmatrix}
	0       & -\omega \\ %[10pt]
	\omega  &  ~~0  
	\end{bmatrix}
		\begin{bmatrix} 
		x \\ 
		y
		\end{bmatrix}
		+ 
		\begin{bmatrix} 
		f^1(x,y) \\ 
		f^2(x,y) 
		\end{bmatrix},
\end{equation}
The coefficient $a$, from \cite{Wiggins2003} and \cite{Marsden1976}, which determines what kind of Hopf bifurcation will occur, can be calculated as,
\begin{equation}
\begin{split}
    \label{eqn:a_hopf}
a = &\tfrac{1}{16}\left[f^1_{xxx} + f^1_{xyy} + f^2_{xxy} + f^2_{yyy}\right]
\\
&+\tfrac{1}{16 \omega} \left[f^1_{xy}(f^1_{xx} + f^1_{yy}) -
f^2_{xy}(f^2_{xx} + f^2_{yy}) 
- f^1_{xx} f^2_{xx} + f^1_{yy} f^2_{yy}\right],
\end{split}
\end{equation}
where all partial derivatives are evaluated at the bifurcation point, $\theta = \bar \theta$, $x=0$, $y=0$,
\begin{equation}
\begin{split}
F^1_{xx} &= ~~\barv ( 4\cl^3 - 4\cl^3 \clpp + 36\cl\cd^2),
\\
F^1_{xy} &= ~~\om 12\cl\cd,
\\
F^1_{yy} &= ~~\barv 4\cl   (\cl^2 - \cl\cdp - 2\cd^2),
\\
F^2_{xx} &= -\barvtwo (4\cl^2\cd +8\cd^3 + 4\cl^2\cdpp +16\cl\cd\cdp ),
\\
F^2_{xy} &= -\barv \om(\cl\clp + \cd^2),
\\
F^2_{yy} &= -\barvtwo 4\cd (\cl^2 - \cl\cdp - 2\cd^2),
\\
% \end{split}
% \end{equation}
% and
% \begin{equation}
% \begin{split}
F^1_{xxx} &=-\barv(8\cl^3\clppp +24\cl^2\cd\clpp +96 \cl\cd^3-32\cl^3\cd  ),
\\ 
F^1_{xyy} &= -\barv 32 \cd\cl  (\cl^2 - \cl\cdp - 2\cd^2),
\\
F^1_{xxy} &=\barv(4[\cl-\clpp]\cl^2(\tfrac{\om}{\barv})-40\cl\cd^2(\tfrac{\om}{\barv})),
\\ 
F^1_{yyy} &= -\barv 12 \cl  (\cl^2 - \cl\cdp - 2\cd^2)(\tfrac{\om}{\barv}),
\\ 
F^2_{xxy} &= -\barv \om (8\cl^2\cdpp +8\cl\cd\cdp),
\\
F^2_{yyy} &=~~0,
\end{split}
\end{equation}
and we get the partial derivatives of $\mathbf{f}(x,y)$ from the relationship,
\begin{equation}
    \mathbf{f}(x,y)=\mathbf{P}^{-1} \mathbf{F}(x,y)
\end{equation}
which give us,
\begin{equation}
\begin{split}
f^1(x,y) &= ~~\tfrac{1}{2 C_L} F_1 (x,y),
\\
f^2(x,y) &=-\tfrac{\bar v^{*} C_D}{\omega C_L} F_1(x,y) +   \tfrac{1}{\omega} F_2(x,y).
\end{split}
\end{equation}
Knowing the sign of $a$ along with the sign of $\tau^{\prime}$ will determine which of the four cases of Hopf bifurcation is present, via the Poincar\'e-Andronov-Hopf Bifurcation Theorem \cite{Wiggins2003}. 

% For example, for the pitch bifurcation diagram of the `kinematic squirrel', we see a Hopf bifurcation for a critical $\bar \theta$ near 0. We can see that $d<0$ (eigenvalues going from right half-plane to left half-plane as $\theta-\bar \theta$ increases through zero). The numerically determined unstable limit cycle exists for $\theta > \bar \theta$, which is consistent with $a>0$, so we predict that calculating \eqref{eqn:a_hopf} will give $a>0$.
We also predict that when a limit cycle exists, it will have a period of approximately $T=\tfrac{2 \pi}{\omega}$ where $\omega$ is given from \eqref{Hopf_omega}, and that the radius of the limit cycle in the $(x,y)$ plane, close to the pitch value $\bar \theta$, is given by,
\begin{equation}
\rho =\sqrt{-\tfrac{d}{a} (\theta - \bar \theta)}.
\end{equation}
Notice that the dependence of $\rho$ on the constants $a$ and $d$, as well as distance away from the bifurcation point, $(\theta - \bar \theta)$, reveal how `quickly' the size of the limit cycle grows.
The amplitude of the limit cycle in terms of glide angle $\gamma$ is provided from \eqref{change_to_xy} as, 
\begin{equation}
\rho_{\gamma} = 2 C_L \rho = 2 C_L \sqrt{-\tfrac{d}{a} (\theta - \bar \theta)}.
\end{equation}

\subsection{Stable node case}\label{stable_node}

If $\bar \tau<0$ and $\bar \tau^2 - 4 \bar \Delta > 0$ (so 
$\sqrt{\bar \tau^2 - 4 \bar \Delta}>0$), then we have two real, and negative, eigenvalues.
The larger magnitude eigenvalue is
\begin{equation}
    \lambda_{ss} = \dfrac{\bar \tau - \sqrt{\bar \tau^2 - 4 \bar \Delta}}{2}
    = \tfrac{1}{2} \bar v^* \left(- C_L^{\prime}- 3 C_D 
    - \sqrt{(C_D - C_L^{\prime})^2 - 8 C_L (C_L - C_D^{\prime})} \right),
\end{equation}
and the smaller magnitude eigenvalue is
\begin{equation}\label{smaller_stable}
    \lambda_s = \dfrac{\bar \tau + \sqrt{\bar \tau^2 - 4 \bar \Delta}}{2}
        = \tfrac{1}{2} \bar v^* \left(- C_L^{\prime}- 3 C_D 
    + \sqrt{(C_D - C_L^{\prime})^2 - 8 C_L (C_L - C_D^{\prime})} \right), 
\end{equation}
so $\lambda_{ss} < \lambda_s < 0$,
where `$s$' denotes {\it  stable}  and `$ss$' denotes {\it super stable}. 
Let the corresponding eigenvectors be $\ee_{ss}$ and $\ee_s$, respectively, understood as column vectors. 

Now $\bar \tau<0$ implies that 
\begin{equation}
C_L^{\prime} > - 3 C_D,
\end{equation}
and $\bar \tau^2 - 4 \bar \Delta > 0$ implies that
\begin{equation}
(C_D - C_L^{\prime})^2 > 8 C_L (C_L - C_D^{\prime})
\end{equation}

We can solve for $\ee_s$, since it will give us a local approximation of the {\it terminal velocity manifold (TVM)} described in the text. Suppose all we want is the slope $\bar m$ (in $(\psi,r)$ coordinates), so  we let $\ee_s = [-1, -\bar m]^T$.
From the eigenvector formula,
\begin{equation}
\mathbf{A} \ee_s = \lambda_s \ee_s,
\end{equation}
where,
\begin{equation}
\mathbf{A}=
\begin{bmatrix}
	a  &   b  \\
	c  &  d
	\end{bmatrix},
\end{equation}
we have
\begin{equation}
\bar m = \frac{\lambda_s-a}{b},
\end{equation}
and using
\eqref{linearization_matrix_A} and
\eqref{smaller_stable}, we get
\[
a = \bar v^* \left[- C_L^{\prime}- C_D \right], \quad  
b =\left[ -2 C_L \right] ,
\]
and thus,
\begin{equation}\label{slope_psi_r}
\bar m = \frac{\bar v^*}{4 C_L} \left( C_D -  C_L^{\prime} - \sqrt{(C_D - C_L^{\prime})^2 - 8 C_L (C_L - C_D^{\prime})} \right).
\end{equation} 
We want the slope $m$ in $(\bar v_x,\bar v_z)$ coordinates, so, using the relationship between the cartesian and polar coordinates,
\begin{equation}
\begin{split}
    \bar{v}_x =& ~~~\bar{v} \cos{\gamma}, \\
    \bar{v}_z =& -\bar{v} \sin{\gamma}, 
\end{split}
\end{equation} 
we write the transformation between local vectors,
\begin{equation}
	\begin{bmatrix} 
	d \bar v_x \\ 
	d \bar v_z 
	\end{bmatrix} =
\begin{bmatrix}
	-\bar v^* \sin \gamma^*   &   ~~\cos \gamma^* \\ %[10pt]
	-\bar v^* \cos \gamma^*  &  - \sin \gamma^*  
	\end{bmatrix}
		\begin{bmatrix} 
		d \psi \\ 
		d r
		\end{bmatrix},
\end{equation}
and letting $dr = \bar m~d \psi$, we get the slope of the terminal velocity manifold,
\begin{equation}\label{slope_v_x_v_z}
 m = \frac{d \bar v_z}{d \bar v_x} = \frac{\bar v^*\cos \gamma^*  + \bar m \sin \gamma^*}{\bar v^*\sin \gamma^* - \bar m \cos \gamma^* },
\end{equation} 
with $\bar m$ as in \eqref{slope_psi_r}. Note, this is the {\it local} slope of the terminal velocity manifold, as evaluated at the stable node point. The slope may change, i.e., the manifold may be curved, as explored in the next case.

For completeness, we also compute the eigenvector $\ee_{ss}= [-1, -\bar n]^T$, and get,
\begin{equation}\label{slope_psi_r_e_ss}
\bar n = \frac{\bar v^*}{4 C_L} \left( C_D -  C_L^{\prime} + \sqrt{(C_D - C_L^{\prime})^2 - 8 C_L (C_L - C_D^{\prime})} \right).
\end{equation}

\subsection{Saddle case}\label{saddle}

If $\bar \Delta < 0$, so $\bar \Delta = - |\bar \Delta|$, then
$\sqrt{\bar \tau^2 - 4 \bar \Delta}=\sqrt{\bar \tau^2 + 4 |\bar \Delta|}>|\bar \tau|$, then we have two real eigenvalues, one negative ($\lambda_s$) and one positive ($\lambda_u$).
The negative eigenvalue is
\begin{equation}
    \lambda_{s} = \dfrac{\bar \tau - \sqrt{\bar \tau^2 - 4 \bar \Delta}}{2}
    = \tfrac{1}{2} \bar v^* \left(- C_L^{\prime}- 3 C_D 
    - \sqrt{(C_D - C_L^{\prime})^2 - 8 C_L (C_L - C_D^{\prime})} \right),
\end{equation}
and the positive eigenvalue is,
\begin{equation}\label{smaller_stable}
    \lambda_u = \dfrac{\bar \tau + \sqrt{\bar \tau^2 - 4 \bar \Delta}}{2}
        = \tfrac{1}{2} \bar v^* \left(- C_L^{\prime}- 3 C_D 
    + \sqrt{(C_D - C_L^{\prime})^2 - 8 C_L (C_L - C_D^{\prime})} \right), 
\end{equation}
Let the corresponding eigenvectors be $\ee_{s}$ and $\ee_u$, respectively, understood as column vectors.

We can solve for $\ee_u$, since it will give us a local approximation of the {\it terminal velocity manifold} described in the text. All we want is the slope $\bar m$ (in $(\psi,r)$ coordinates), so  we let $\ee_u = [-1, -\bar m]^T$.
From the eigenvector formula
\begin{equation}
\mathbf{A} \ee_u = \lambda_u \ee_u,
\end{equation}
where
\begin{equation}
\mathbf{A}=
\begin{bmatrix}
	a  &   b  \\
	c  &  d
	\end{bmatrix},
\end{equation}
we have
\begin{equation}
\bar m = \frac{\lambda_u-a}{b},
\end{equation}
and using
\eqref{linearization_matrix_A} and
\eqref{smaller_stable}, we get
\[
a = \bar v^* \left[- C_L^{\prime}- C_D \right], \quad  
b =\left[ -2 C_L \right] ,
\]
and thus,
\begin{equation}\label{slope_psi_r}
\bar m = \frac{\bar v^*}{4 C_L} \left( C_D -  C_L^{\prime} - \sqrt{(C_D - C_L^{\prime})^2 - 8 C_L (C_L - C_D^{\prime})} \right).
\end{equation} 
We want the slope $m$ in $(\bar v_x,\bar v_z)$ coordinates, so, using the relationship between the cartesian and polar coordinates,
\begin{equation}\label{transformation_v_gamma_to_vx_vz}
\begin{split}
    \bar{v}_x =& ~~~\bar{v} \cos{\gamma}, \\
    \bar{v}_z =& -\bar{v} \sin{\gamma} ,
\end{split}
\end{equation} 
we write the transformation between local vectors,
\begin{equation}
	\begin{bmatrix} 
	d \bar v_x \\ 
	d \bar v_z 
	\end{bmatrix} =
\begin{bmatrix}
	-\bar v^* \sin \gamma^*   &   ~~\cos \gamma^* \\ %[10pt]
	-\bar v^* \cos \gamma^*  &  - \sin \gamma^*  
	\end{bmatrix}
		\begin{bmatrix} 
		d \psi \\ 
		d r
		\end{bmatrix},
\end{equation}
and letting $dr = \bar m~d \psi$, we get the slope of the terminal velocity manifold,
\begin{equation}\label{slope_v_x_v_z}
 m = \frac{d \bar v_z}{d \bar v_x} = \frac{\bar v^*\cos \gamma^*  + \bar m \sin \gamma^*}{\bar v^*\sin \gamma^* - \bar m \cos \gamma^* }, 
\end{equation} 
with $\bar m$ as in \eqref{slope_psi_r}.
Again, this is the {\it local} slope of the terminal velocity manifold, as evaluated at the saddle point, and may be different from the local slope of the terminal velocity manifold as evaluated at the stable node, if the manifold is curved.

For completeness, we also compute the eigenvector $\ee_{s}= [-1, -\bar n]^T$, and get,
\begin{equation}\label{slope_psi_r_e_ss}
\bar n = \frac{\bar v^*}{4 C_L} \left( C_D -  C_L^{\prime} + \sqrt{(C_D - C_L^{\prime})^2 - 8 C_L (C_L - C_D^{\prime})} \right).
\end{equation}

\subsection{Higher order approximation of terminal velocity manifold}\label{higher_order}
Define the matrix $\mathbf{P}$ as
\begin{align*}
\mathbf{P} &= \left[ \ee_u ~~ \ee_{s} \right], \\
&= \begin{bmatrix}
	-1       	& -1 		\\
	-\bar m  	& -\bar n  
	\end{bmatrix},
\end{align*}
so 
$\ee_u$ is the first column of $\mathbf{P}$ and
$\ee_{s}$ is the second column of $\mathbf{P}$.

This matrix defines a linear transformation to the eigenbasis $(x,y)$ via
\[
		\begin{bmatrix} 
		\psi \\ 
		r 
		\end{bmatrix}
		= 
\mathbf{P}
		\begin{bmatrix} 
		x \\ 
		y 
		\end{bmatrix},
\]
so 
the $x$ coordinate is along the $\ee_{u}$ direction and
the $y$ coordinate is along the $\ee_{s}$ direction.
Note that
\begin{equation}\label{transformation_x_y_to_psi_r}
\begin{split}
\psi &= - x - y, \\
r     &= 	-\bar m x 	 -\bar n y ,
\end{split}
\end{equation}
and
\begin{equation}
\mathbf{P}^{-1}=
\frac{1}{\bar m - \bar n} 
\begin{bmatrix}
	~~\bar n &   -1  \\
	  -\bar m &~~1
	\end{bmatrix}.
\end{equation}

Considering (\ref{eqn:jacobian_polar_local}), we have
\begin{equation}
    \label{eqn:jacobian_polar_local_xy1}
	\begin{bmatrix} 
	x^{\prime} \\ 
	y^{\prime} 
	\end{bmatrix} =
\underbrace{	\mathbf{P}^{-1} \mathbf{A} \mathbf{P} }_{\Lam}
		\begin{bmatrix} 
		x \\ 
		y
		\end{bmatrix}
		+ 
		\mathbf{P}^{-1} \mathbf{F}(x,y) ,
\end{equation}
where $\Lam$ is the diagonalized matrix,
\[
\Lam = \begin{bmatrix}
	\lambda_{u} 	&	 0		 \\
	0           		&  \lambda_{s}  
	\end{bmatrix},
\]
and where %extreme 
care must be taken to calculate the second-order terms, $\mathbf{P}^{-1} \mathbf{F}(x,y)$, in terms of $x$ and $y$, where $\mathbf{F}(x,y)$ 
is given as in \eqref{eqn:jacobian_polar_local_F}-\eqref{eqn:jacobian_polar_local_F2}.

We will re-write the nonlinear terms, defining $\mathbf{f}(x,y)=\mathbf{P}^{-1} \mathbf{F}(-x - y, -\bar m x 	 -\bar n y )$, so
 \begin{align*}
\mathbf{f}(x,y) 
&=
\frac{1}{\bar m - \bar n} \begin{bmatrix}
	~~\bar n &   -1  \\
	  -\bar m &~~1
	\end{bmatrix}
 \begin{bmatrix}
	a_1 x^2 + a_2 xy + a_3 y^2\\
	b_1 x^2 + b_2 xy + b_3 y^2
	\end{bmatrix}
	+ \mathcal{O}(3),
\end{align*}
where
 \begin{align*}
a_1 & = a^1 + b^1 \bar m + c^1 \bar m^2 , \\
a_2 & = 2 a^1 + b^1 (\bar m + \bar n) + 2c^1 \bar m \bar n , \\
a_3 & = a^1 + b^1 \bar n + c^1 \bar n^2 , \\
b_1 & = a^2 + b^2 \bar m + c^2 \bar m^2 , \\
b_2 & = 2 a^2 + b^2 (\bar m + \bar n) + 2c^2 \bar m \bar n , \\
b_3 & = a^2 + b^2 \bar n + c^2 \bar n^2  ,\\
a^1 &= \tfrac{\bar v^*}{2}  \left[
 -  C_L  -  C_L^{\prime \prime} 
 \right] ,\\
 b^1 &= \left[  C_D - C_L^{\prime}  \right], \\
 c^1 &= \tfrac{ 1}{\bar v^{*}} [C_L] ,\\
a^2 &= \tfrac{\bar v^{*2}}{2}  \left[- C_D - C_D^{\prime \prime} \right], \\
b^2 &= 2 \bar v^*[-C_D^{\prime}] , \\
c^2 &=  [-C_D].
\end{align*}
We will refer to the components of $\mathbf{f}$ as $(f,g)$.

The resulting equation now has the form,
\begin{equation}
    \label{eqn:eigenbasis_hopf21}
	\begin{bmatrix} 
	 x^{\prime} \\ 
	 y^{\prime}
	\end{bmatrix}  =
\begin{bmatrix}
	\lambda_{u} 	&	 0		 \\
	0           		&  \lambda_{s}  
	\end{bmatrix}
		\begin{bmatrix} 
		x \\ 
		y
		\end{bmatrix}
		+ 
		\begin{bmatrix} 
		f(x,y) \\ 
		g(x,y) 
		\end{bmatrix} ,
\end{equation}
where,
\begin{equation}
\begin{split}
f(x,y) &= c_1 x^2 + c_2 xy + c_3 y^2 + \mathcal{O}(3) , \\
g(x,y) &= d_1 x^2 + d_2 xy + d_3 y^2 + \mathcal{O}(3),
\end{split}
\end{equation}
where,
 \begin{align}
c_i & = \tfrac{1}{\bar m - \bar n} ( ~~\bar n a_i - b_i),  \\
d_i & = \tfrac{1}{\bar m - \bar n} ( -\bar m a_i + b_i).  
\end{align}

We will end up with the expansion about the equilibrium in a form where we can now calculate the terminal velocity manifold. We re-write \eqref{eqn:eigenbasis_hopf21} as,
\begin{equation}\label{manifold_ready}
\begin{split}
x^{\prime} &= \lambda_u    x + f(x,y),  \\
y^{\prime} &= \lambda_{s} y + g(x,y), 
\end{split}
\end{equation}
where $f(x,y)$ is second-order and higher in $x$ and $y$, as is $g(x,y)$.

We assume the terminal velocity manifold is given by $y=H(x)$, where $H(x)$ has the Taylor series expansion form,
\begin{equation}\label{h_function}
H(x)=ax^2 + bx^3 + \mathcal{O}(x^4),
\end{equation}
We can solve for the coefficients $a$ and $b$ by taking the time derivative of $y=H(x)$, which gives
\[
\pa{H}{x} x^{\prime} - y^{\prime} = 0,
\]
i.e.,
\[
\pa{H}{x} 
\left[ \lambda_u    x + f(x,H(x))  \right]
- 
\left[ \lambda_{s} H(x) + g(x,H(x))  \right] = 0,
\]
and equating like powers of $x$,
%Let's suppose $f(x,y) = cx^2 + ...$ and $g(x,y) = d x^2 + ...$, where $c$ and $d$ are constants, then
\[
(2 a x + 3b x^2 + \mathcal{O}(x^3))
\left[ \lambda_u x + c_1 x^2 + \mathcal{O}(x^3) \right]
- 
\left[ \lambda_{s} a x^2 + d_1 x^2  + \mathcal{O}(x^3) \right] = 0,
\]
i.e.,
\[
\left[ a (2 \lambda_u -  \lambda_{s}) -d_1 \right] x^2  = 0,
\]
so
\[
a = \frac{d_1}{(2 \lambda_u -  \lambda_{s})}.
\]
Thus, to a second-order approximation in the $(x,y)$ coordinates, the terminal velocity manifold is expressed as,
\[
y=H(x)=\frac{d_1}{(2 \lambda_u -  \lambda_{s})} x^2 + \mathcal{O}(x^3),
\]
thus, in general the manifold will be curved.
To get the curvature up through third-order terms, we need $b$, so we would have to have $\mathbf{f}(x,y)$ calculated up to the third-order terms.
We note that this whole process can be automated using automatic power series expansion tools \cite{GoKoLoMaMaRo2004}.

To get the terminal velocity manifold in the original $(\bar{v}_x,\bar{v}_z)$ coordinates, we use  \eqref{transformation_x_y_to_psi_r}, 
\eqref{transformation_v_gamma_to_psi_r}, and \eqref{transformation_v_gamma_to_vx_vz}, to get a parametric curve, 
\begin{equation}\label{manifold_vx_vz}
\begin{split}
\bar{v}_x (u) &= ~~(\bar{v}^{*} - \bar m u - \bar n H(u)) \cos ( \gamma^* - u - H(u) ), \\ 
\bar{v}_z (u) &=    -(\bar{v}^{*} - \bar m u - \bar n H(u)) \sin ( \gamma^*  - u - H(u) ),
\end{split}
\end{equation}
parametrized by a curvilinear coordinate $u$ which we take to be in some interval $I \subset \mathbb{R}$, where the function $H$ is as in \eqref{h_function}.

%Knowing $a$, 
We can determine the lowest order non-linear approximation of the vector field {\it along the one-dimensional terminal velocity manifold}, as,
\begin{equation}\label{manifold_flow2}
\begin{split}
u^{\prime} 
&= \lambda_u    u + f(u,H(u)), \\ 
&= \lambda_u    u + c_1 u^2 + \mathcal{O}(u^3),
\end{split}
\end{equation}
where we are using $u$ as a curvilinear (arc-length) coordinate along the terminal velocity manifold. 
This is the analytical formula for the  `speed' (actually, acceleration) along the terminal velocity curve vs.\ location along that curve.
This tells us that a second equilibrium point (stable) will show up along the terminal velocity manifold at $u=-\lambda_u/ c_1$, which is an approximation of where the stable node is located.

Note  that the form of $u^{\prime}$ as a function of $u$ is topologically conjugate to the $h$ function defined in \eqref{eq:DifferenceEquation}.

It is interesting that the local approximation of the dynamics around the saddle point can imply the existence of the stable point, in agreement with the global result of Theorem \ref{odd_points}. 
Also noteworthy is the fact that the terminal velocity manifold constructed from the saddle point to the stable node is a heteroclinic trajectory (backward asymptotic to the saddle point and forward asymptotic to the stable node; see Figure \ref{fig:AttractingSubmanifold}) along which the relative speed varies according to \eqref{manifold_flow2}.

To find out what role the shape of the terminal velocity manifold plays in modifying the vector field along it, we must consider third-order terms in (\ref{eqn:jacobian_polar_local}), which would give us,  
\begin{equation}\label{manifold_flow3}
\begin{split}
u^{\prime} 
&= \lambda_u    u + f_2(u,H(u)) + f_3(u,H(u)) + \mathcal{O}(u^4), \\
&= \lambda_u    u + c_1 u^2 + c_2 a u^3 + k_1 u^3  + \mathcal{O}(u^4), \\
&= \lambda_u    u + c_1 u^2 + \left[ c_2 \tfrac{d}{(2 \lambda_u -  \lambda_{s})} + k_1 \right] u^3  + \mathcal{O}(u^4),
\end{split}
\end{equation}
where
$f_2(x,y) = c_1 x^2 + c_2 xy + c_3 y^2$ and
$f_3(x,y) = k_1 x^3 + k_2 x^2y + k_3 x y^2 + k_4 y^3$
are the second and third order terms in the $x^{\prime}$ equation of (\ref{manifold_ready}), respectively. 

Note that 
\[
k_1 = \tfrac{1}{\bar m - \bar n} ( \bar n \tilde a_1- \tilde b_1),  
\]
where
\begin{align*}
\tilde a_1 & = - ( A_1 + A_2 \bar m + A_3 \bar m^2   + A_4 \bar m^3 ), \\
\tilde b_1 & = - ( B_1 + B_2 \bar m + B_3 \bar m^2  + B_4 \bar m^3  ),
\end{align*}
and where the $A_i$ and $B_i$ come from the third-order coefficients in \eqref{eqn:jacobian_polar_local_F1} and \eqref{eqn:jacobian_polar_local_F2}, respectively,
\begin{align*}
A_1 & = \tfrac{\bar v^*}{6} [ C_D - C_L^{\prime \prime \prime} ], \\
A_2 & = \tfrac{1}{2} [ C_L - C_L^{\prime \prime} ], \\
A_3 & = \tfrac{ 1}{\bar v^{*}}[ -C_D ],  \\
A_4 & = \tfrac{ 1}{\bar v^{*2}}[ -C_L ], \\
B_1 & = \tfrac{\bar v^{*2}}{6}[-C_L - C_D^{\prime \prime \prime}], \\
B_2 & = \bar v^* [ - C_D^{\prime \prime}], \\
B_3 & = \tfrac{ 1}{2}[-C_D^{\prime}], \\
B_4 & = 0. 
\end{align*}
We note that the third-order coefficient $b$ is given by
\[
b = \frac{g_1 - a(2c_1 - d_2)}{3 \lambda_u -  \lambda_{s}},
\]
where,
\[
g_1 = \tfrac{1}{\bar m - \bar n} ( -\bar m \tilde a_1+ \tilde b_1).
\]

\end{appendices}

\clearpage
\section*{References}


\begin{thebibliography}{54}
\providecommand{\natexlab}[1]{#1}
\providecommand{\url}[1]{\texttt{#1}}
\expandafter\ifx\csname urlstyle\endcsname\relax
  \providecommand{\doi}[1]{doi: #1}\else
  \providecommand{\doi}{doi: \begingroup \urlstyle{rm}\Url}\fi

\bibitem[Socha et~al.(2015)Socha, Munk, and Byrnes]{socha2015animals}
Farid Socha, John J.and~Jafari, Yonatan Munk, and Greg Byrnes.
\newblock How animals glide: from trajectory to morphology.
\newblock \emph{Canadian Journal of Zoology}, 93\penalty0 (12):\penalty0
  901--924, 2015.

\bibitem[Minami and Azuma(2003)]{minami_various_2003}
Shizuka Minami and Akira Azuma.
\newblock Various flying modes of wind-dispersal seeds.
\newblock \emph{Journal of Theoretical Biology}, 225\penalty0 (1):\penalty0
  1--14, 2003.

\bibitem[Jackson(2000)]{jackson2000glide}
Stephen~M. Jackson.
\newblock Glide angle in the genus {Petaurus} and a review of gliding in
  mammals.
\newblock \emph{Mammal Review}, 30\penalty0 (1):\penalty0 9--30, 2000.

\bibitem[Vernes(2001)]{vernes2001gliding}
Karl Vernes.
\newblock Gliding performance of the northern flying squirrel ({Glaucomys
  sabrinus}) in mature mixed forest of eastern {Canada}.
\newblock \emph{Journal of Mammalogy}, 82\penalty0 (4):\penalty0 1026--1033,
  2001.

\bibitem[McGuire(2005)]{mcguire2005cost}
Robert McGuire, Jimmy A.and~Dudley.
\newblock The cost of living large: comparative gliding performance in flying
  lizards ({A}gamidae: {D}raco).
\newblock \emph{The American Naturalist}, 166\penalty0 (1):\penalty0 93--106,
  2005.

\bibitem[Socha et~al.(2010)Socha, Miklasz, Jafari, and
  Vlachos]{socha_non-equilibrium_2010}
John~J. Socha, Kevin Miklasz, Farid Jafari, and Pavlos~P. Vlachos.
\newblock Non-equilibrium trajectory dynamics and the kinematics of gliding in
  a flying snake.
\newblock \emph{Bioinspiration \& Biomimetics}, 5\penalty0 (4):\penalty0
  045002, 2010.

\bibitem[Bahlman et~al.(2013)Bahlman, Swartz, Riskin, and
  Breuer]{bahlman2013glide}
Joseph~W. Bahlman, Sharon~M. Swartz, Daniel~K. Riskin, and Kenneth~S. Breuer.
\newblock Glide performance and aerodynamics of non-equilibrium glides in
  northern flying squirrels ({Glaucomys sabrinus}).
\newblock \emph{Journal of The Royal Society Interface}, 10\penalty0
  (80):\penalty0 20120794, 2013.

\bibitem[Vogel(1994)]{vogel1994life}
Steven Vogel.
\newblock \emph{Life in moving fluids: the physical biology of flow}.
\newblock Princeton University Press, 1994.

\bibitem[Kova{\v{c}} et~al.(2011)Kova{\v{c}}, Fauria, Zufferey, Floreano,
  et~al.]{kovavc2011epfl}
Mirko Kova{\v{c}}, Oriol Fauria, Jean-Christophe Zufferey, Dario Floreano,
  et~al.
\newblock The {EPFL} jumpglider: A hybrid jumping and gliding robot with rigid
  or folding wings.
\newblock In \emph{Robotics and Biomimetics (ROBIO), 2011 IEEE international
  conference on}, pages 1503--1508. IEEE, 2011.

\bibitem[Dickson(2013)]{dickson2013design}
Jonathan~E. Dickson, James D.and~Clark.
\newblock Design of a multimodal climbing and gliding robotic platform.
\newblock \emph{IEEE/ASME Transactions on Mechatronics}, 18\penalty0
  (2):\penalty0 494--505, 2013.

\bibitem[Leonard et~al.(2010)Leonard, Paley, Davis, Fratantoni, Lekien, and
  Zhang]{leonard2010coordinated}
Naomi~E. Leonard, Derek~A. Paley, Russ~E. Davis, David~M. Fratantoni, Francois
  Lekien, and Fumin Zhang.
\newblock Coordinated control of an underwater glider fleet in an adaptive
  ocean sampling field experiment in {M}onterey {B}ay.
\newblock \emph{Journal of Field Robotics}, 27\penalty0 (6):\penalty0 718--740,
  2010.

\bibitem[Maxwell(1854)]{maxwell1854particular}
James~Clerk Maxwell.
\newblock On a particular case of the descent of a heavy body in a resisting
  medium.
\newblock \emph{The Cambridge and Dublin Mathematical Journal}, 9:\penalty0
  145--148, 1854.

\bibitem[Lamb(1932)]{lamb1932hydrodynamics}
Horace Lamb.
\newblock \emph{Hydrodynamics}.
\newblock Cambridge University Press, 1932.

\bibitem[Andronov et~al.(2013)Andronov, Vitt, and Khaikin]{andronov2013theory}
Aleksandr~A. Andronov, Aleksandr~A. Vitt, and Semen~E. Khaikin.
\newblock \emph{Theory of Oscillators: Adiwes International Series in Physics},
  volume~4.
\newblock Elsevier, 2013.

\bibitem[Field et~al.(1997)Field, Klaus, Moore, and Nori]{field_chaotic_1997}
Stuart~B. Field, M.~Klaus, M.~G. Moore, and Franco Nori.
\newblock Chaotic dynamics of falling disks.
\newblock \emph{Nature}, 388\penalty0 (6639):\penalty0 252--254, 1997.

\bibitem[Ern et~al.(2012)Ern, Risso, Fabre, and Magnaudet]{ern2012wake}
Patricia Ern, Fr{\'e}d{\'e}ric Risso, David Fabre, and Jacques Magnaudet.
\newblock Wake-induced oscillatory paths of bodies freely rising or falling in
  fluids.
\newblock \emph{Annual Review of Fluid Mechanics}, 44:\penalty0 97--121, 2012.

\bibitem[Andersen et~al.(2005{\natexlab{a}})Andersen, Pesavento, and
  Wang]{andersen_unsteady_2005}
A.~Andersen, U.~Pesavento, and Z.~Jane Wang.
\newblock Unsteady aerodynamics of fluttering and tumbling plates.
\newblock \emph{Journal of Fluid Mechanics}, 541:\penalty0 65--90,
  2005{\natexlab{a}}.

\bibitem[Andersen et~al.(2005{\natexlab{b}})Andersen, Pesavento, and
  Wang]{andersen_analysis_2005}
A.~Andersen, U.~Pesavento, and Z.~Jane Wang.
\newblock Analysis of transitions between fluttering, tumbling and steady
  descent of falling cards.
\newblock \emph{Journal of Fluid Mechanics}, 541:\penalty0 91--104,
  2005{\natexlab{b}}.

\bibitem[Tam et~al.(2010)Tam, Bush, Robitaille, and Kudrolli]{tam2010tumbling}
Daniel Tam, John~W.M. Bush, Michael Robitaille, and Arshad Kudrolli.
\newblock Tumbling dynamics of passive flexible wings.
\newblock \emph{Physical Review Letters}, 104\penalty0 (18):\penalty0 184504,
  2010.

\bibitem[Vincent et~al.(2016)Vincent, Shambaugh, and Kanso]{vincent2016holes}
Lionel Vincent, W.~Scott Shambaugh, and Eva Kanso.
\newblock Holes stabilize freely falling coins.
\newblock \emph{Journal of Fluid Mechanics}, 801:\penalty0 250--259, 2016.

\bibitem[Aref and Jones(1993)]{aref1993chaotic}
Hassan Aref and Scott~W. Jones.
\newblock Chaotic motion of a solid through ideal fluid.
\newblock \emph{Physics of Fluids A: Fluid Dynamics}, 5\penalty0 (12):\penalty0
  3026--3028, 1993.

\bibitem[Borisov et~al.(2007)Borisov, Mamaev, and
  Ramodanov]{borisov2007dynamic}
Alexey~V. Borisov, Ivan~S. Mamaev, and Sergey~M. Ramodanov.
\newblock Dynamic interaction of point vortices and a two-dimensional cylinder.
\newblock \emph{Journal of Mathematical Physics}, 48\penalty0 (6):\penalty0
  065403, 2007.

\bibitem[Roenby and Aref(2010)]{Roenby1871}
Johan Roenby and Hassan Aref.
\newblock Chaos in body{\textendash}vortex interactions.
\newblock \emph{Proceedings of the Royal Society of London A: Mathematical,
  Physical and Engineering Sciences}, 466\penalty0 (2119):\penalty0 1871--1891,
  2010.

\bibitem[Yeaton et~al.(2017)Yeaton, Socha, and Ross]{yeaton_global_2017}
Isaac~J. Yeaton, John~J. Socha, and Shane~D. Ross.
\newblock Global dynamics of non-equilibrium gliding in animals.
\newblock \emph{Bioinspiration \& Biomimetics}, 12\penalty0 (2):\penalty0
  026013, 2017.

\bibitem[Wiggins(2013)]{wiggins2013normally}
Stephen Wiggins.
\newblock \emph{Normally hyperbolic invariant manifolds in dynamical systems},
  volume 105.
\newblock Springer Science \& Business Media, 2013.

\bibitem[Kevrekidis et~al.(2003)Kevrekidis, Gear, Hyman, Kevrekidid, Runborg,
  Theodoropoulos, et~al.]{kevrekidis2003equation}
Ioannis~G Kevrekidis, C~William Gear, James~M Hyman, Panagiotis~G Kevrekidid,
  Olof Runborg, Constantinos Theodoropoulos, et~al.
\newblock Equation-free, coarse-grained multiscale computation: Enabling
  mocroscopic simulators to perform system-level analysis.
\newblock \emph{Communications in Mathematical Sciences}, 1\penalty0
  (4):\penalty0 715--762, 2003.

\bibitem[Shadden et~al.(2005)Shadden, Lekien, and
  Marsden]{shadden2005definition}
Shawn~C. Shadden, Francois Lekien, and Jerrold~E. Marsden.
\newblock Definition and properties of lagrangian coherent structures from
  finite-time lyapunov exponents in two-dimensional aperiodic flows.
\newblock \emph{Physica D: Nonlinear Phenomena}, 212\penalty0 (3):\penalty0
  271--304, 2005.

\bibitem[Haller(2011)]{haller_variational_2011}
George Haller.
\newblock A variational theory of hyperbolic {Lagrangian} coherent structures.
\newblock \emph{Physica D: Nonlinear Phenomena}, 240\penalty0 (7):\penalty0
  574--598, 2011.

\bibitem[Haller(2015)]{haller_lagrangian_2015}
George Haller.
\newblock Lagrangian {Coherent} {Structures}.
\newblock \emph{Annual Review of Fluid Mechanics}, 47\penalty0 (1):\penalty0
  137--162, 2015.

\bibitem[Wang and Birch(2004)]{wang2004unsteady}
Z.~Jane Wang and Michael~H. Birch, James M .and~Dickinson.
\newblock Unsteady forces and flows in low {Reynolds} number hovering flight:
  two-dimensional computations vs robotic wing experiments.
\newblock \emph{Journal of Experimental Biology}, 207\penalty0 (3):\penalty0
  449--460, 2004.

\bibitem[Holden et~al.(2014)Holden, Socha, Cardwell, and
  Vlachos]{holden_aerodynamics_2014}
Daniel Holden, John~J. Socha, Nicholas~D. Cardwell, and Pavlos~P. Vlachos.
\newblock Aerodynamics of the flying snake {Chrysopelea} paradisi: how a bluff
  body cross-sectional shape contributes to gliding performance.
\newblock \emph{The Journal of Experimental Biology}, 217\penalty0
  (3):\penalty0 382--394, 2014.

\bibitem[Sheldahl and Klimas(1981)]{sheldahl1981aerodynamic}
Robert~E. Sheldahl and Paul~C. Klimas.
\newblock Aerodynamic characteristics of seven symmetrical airfoil sections
  through 180-degree angle of attack for use in aerodynamic analysis of
  vertical axis wind turbines.
\newblock Technical report, Sandia National Labs., Albuquerque, NM (USA), 1981.

\bibitem[Socha(2011)]{socha2011gliding}
John~J. Socha.
\newblock Gliding flight in chrysopelea: Turning a snake into a wing.
\newblock \emph{Integrative and Comparative Biology}, 51\penalty0 (6):\penalty0
  969--982, 2011.

\bibitem[Socha(2002)]{socha2002kinematics}
John~J Socha.
\newblock Kinematics: Gliding flight in the paradise tree snake.
\newblock \emph{Nature}, 418\penalty0 (6898):\penalty0 603--604, 2002.

\bibitem[Jafari et~al.(2014)Jafari, Ross, Vlachos, and
  Socha]{jafari_theoretical_2014}
Farid Jafari, Shane~D. Ross, Pavlos~P. Vlachos, and John~J. Socha.
\newblock A theoretical analysis of pitch stability during gliding in flying
  snakes.
\newblock \emph{Bioinspiration \& Biomimetics}, 9\penalty0 (2):\penalty0
  025014, 2014.

\bibitem[Krishnan et~al.(2014)Krishnan, Socha, Vlachos, and
  Barba]{krishnan2014lift}
Anush Krishnan, John~J. Socha, Pavlos~P. Vlachos, and L.A. Barba.
\newblock Lift and wakes of flying snakes.
\newblock \emph{Physics of Fluids}, 26\penalty0 (3):\penalty0 031901, 2014.

\bibitem[Jafari et~al.(2017)Jafari, Tahmasian, Ross, and
  Socha]{jafari2017control}
Farid Jafari, Sevak Tahmasian, Shane~D. Ross, and John~J. Socha.
\newblock Control of gliding in a flying snake-inspired n-chain model.
\newblock \emph{Bioinspiration \& Biomimetics}, 12\penalty0 (6):\penalty0
  066002, 2017.

\bibitem[Koon et~al.(2008)Koon, Lo, Marsden, and Ross]{koon2008dynamical}
Wang~Sang Koon, Martin~W Lo, Jerrold~E Marsden, and Shane~D. Ross.
\newblock \emph{Dynamical systems, the three-body problem and space mission
  design}.
\newblock Citeseer, 2008.

\bibitem[Kuehn(2016)]{kuehn2016multiple}
Christian Kuehn.
\newblock \emph{Multiple time scale dynamics}, volume~1.
\newblock Springer, 2016.

\bibitem[Nave~Jr et~al.(2019)Nave~Jr, Nolan, and Ross]{nave2019trajectoryfree}
Gary~K. Nave~Jr, Peter~J. Nolan, and Shane~D. Ross.
\newblock Trajectory-free approximation of phase space structures using the
  trajectory divergence rate.
\newblock \emph{Nonlinear Dynamics}, published online, 2019.
\newblock \doi{https://doi.org/10.1007/s11071-019-04814-z}.

\bibitem[Krauskopf et~al.(2005)Krauskopf, Osinga, Doedel, Henderson,
  Guckenheimer, Vladimirsky, Dellnitz, and Junge]{krauskopf2005survey}
Bernd Krauskopf, Hinke~M. Osinga, Eusebius~J. Doedel, Michael~E. Henderson,
  John Guckenheimer, Alexander Vladimirsky, Michael Dellnitz, and Oliver Junge.
\newblock A survey of methods for computing (un) stable manifolds of vector
  fields.
\newblock \emph{International Journal of Bifurcation and Chaos}, 15\penalty0
  (03):\penalty0 763--791, 2005.

\bibitem[Naik and Ross(2017)]{naik2017geometry}
Shibabrat Naik and Shane~D. Ross.
\newblock Geometry of escaping dynamics in nonlinear ship motion.
\newblock \emph{Communications in Nonlinear Science and Numerical Simulation},
  47:\penalty0 48--70, 2017.

\bibitem[Fenichel(1979)]{fenichel1979geometric}
Neil Fenichel.
\newblock Geometric singular perturbation theory for ordinary differential
  equations.
\newblock \emph{Journal of Differential Equations}, 31\penalty0 (1):\penalty0
  53--98, 1979.

\bibitem[Jones(1995)]{jones1995geometric}
Christopher~K.R.T. Jones.
\newblock Geometric singular perturbation theory.
\newblock In \emph{Dynamical Systems}, pages 44--118. Springer, 1995.

\bibitem[Battelino et~al.(1988)Battelino, Grebogi, Ott, Yorke, and
  Yorke]{battelino1988multiple}
Peter~M. Battelino, Celso Grebogi, Edward Ott, James~A. Yorke, and Ellen~D.
  Yorke.
\newblock Multiple coexisting attractors, basin boundaries and basic sets.
\newblock \emph{Physica D: Nonlinear Phenomena}, 32\penalty0 (2):\penalty0
  296--305, 1988.

\bibitem[Skufca et~al.(2006)Skufca, Yorke, and Eckhardt]{skufca2006edge}
Joseph~D. Skufca, James~A. Yorke, and Bruno Eckhardt.
\newblock Edge of chaos in a parallel shear flow.
\newblock \emph{Physical Review Letters}, 96\penalty0 (17):\penalty0 174101,
  2006.

\bibitem[Jones et~al.(2014)Jones, Oliphant, and Peterson]{jones2014scipy}
Eric Jones, Travis Oliphant, and Pearu Peterson.
\newblock $\{$SciPy$\}$: open source scientific tools for $\{$Python$\}$.
\newblock 2014.

\bibitem[Bishop(2006)]{bishop2006relationship}
Kristin~L Bishop.
\newblock The relationship between 3-{D} kinematics and gliding performance in
  the southern flying squirrel, {G}laucomys volans.
\newblock \emph{Journal of Experimental Biology}, 209\penalty0 (4):\penalty0
  689--701, 2006.

\bibitem[Haller and Ponsioen(2016)]{haller2016nonlinear}
George Haller and Sten Ponsioen.
\newblock Nonlinear normal modes and spectral submanifolds: existence,
  uniqueness and use in model reduction.
\newblock \emph{Nonlinear Dynamics}, 86\penalty0 (3):\penalty0 1493--1534,
  2016.

\bibitem[Balasuriya(2017)]{balasuriya2017unsteadily}
Sanjeeva Balasuriya.
\newblock Unsteadily manipulating internal flow barriers.
\newblock \emph{Journal of Fluid Mechanics}, 818:\penalty0 382--406, 2017.

\bibitem[Strogatz(2001)]{Strogatz2001}
Steven~H. Strogatz.
\newblock \emph{{Nonlinear Dynamics and Chaos: With Applications to Physics,
  Biology, Chemistry, and Engineering}}.
\newblock Perseus Books Publishing, LLC, 2001.

\bibitem[Wiggins(2003)]{Wiggins2003}
Stephen Wiggins.
\newblock \emph{{Introduction to Applied Nonlinear Dynamical Systems and
  Chaos}}, volume~2.
\newblock Springer Science \& Business Media, 2003.

\bibitem[Marsden and McCracken(1976)]{Marsden1976}
J.E. Marsden and M.~McCracken.
\newblock \emph{{The Hopf Bifurcation and Its Applications}}.
\newblock Springer-Verlag, New York, Heidelberg, Berlin, 1976.

\bibitem[G\'omez et~al.(2004)G\'omez, Koon, Lo, Marsden, Masdemont, and
  Ross]{GoKoLoMaMaRo2004}
G.~G\'omez, W.~S. Koon, M.~W. Lo, J.~E. Marsden, J.~Masdemont, and S.~D. Ross.
\newblock Connecting orbits and invariant manifolds in the spatial three-body
  problem.
\newblock \emph{Nonlinearity}, 17:\penalty0 1571--1606, 2004.

\end{thebibliography}
\end{document}